\renewcommand{\thetheoremName}
\newtheorem{proposition[[]]}[theoremName]{Proposition G}
\newtheorem{theorem}{Theorem}[section]
\newtheorem{lemma}[theorem]{Lemma}
\newtheorem{proposition}[theorem]{Proposition}
\newtheorem{corollary}[theorem]{Corollary}
\theoremstyle{definition}
\newtheorem{definition}[theorem]{Definition}
\newtheorem{remark}{Remark}
\numberwithin{equation}{section}
\newcommand{\dist}{\operatorname{dist}}
\newcommand{\Vol}{\operatorname{Vol}}
\newcommand{\erre}{\mathbb{R}}
\newcommand{\LL}{\operatorname{L}}
\numberwithin{equation}{section}
\newcommand{\tors}{\operatorname{\mathcal{A}_{1}}}
\begin{document}

\title[Torsional Rigidity]
{TORSIONAL RIGIDITY OF SUBMANIFOLDS \\ WITH CONTROLLED GEOMETRY}

\author[A. Hurtado]{A. Hurtado$^{\natural}$}
\address{Departament de Matem\`{a}tiques, Universitat Jaume I, E-12071 Castell\'o,
Spain.}
 \email{ahurtado@mat.uji.es}
\author[S. Markvorsen]{S. Markvorsen$^{\#}$}
\address{Department of Mathematics, Technical University of Denmark,  DK-2800 Kgs. Lyngby, Denmark}
\email{S.Markvorsen@mat.dtu.dk}
\author[V. Palmer]{V. Palmer*}
\address{Departament de Matem\`{a}tiques, Universitat Jaume I, Castell\'o,
Spain.} \email{palmer@mat.uji.es}
\thanks{$^{\#}$ Supported by the Danish Natural Science Research Council and the Spanish MEC-DGI grant
MTM2007-62344.\\
\indent * Supported by Spanish MEC-DGI grant No.MTM2007-62344, the Caixa Castell\'o Foundation and a grant of the
Spanish MEC {\em Programa de Estancias de profesores e investigadores
espa\~noles en centros de ense\~nanza superior e investigaci\'on extranjeros.}\\
\indent $^{\natural}$ Supported by Spanish MEC-DGI grant
No.MTM2007-62344, the Caixa Castell\'o Foundation}

\subjclass[2000]{Primary  53C42, 58J65, 35J25, 60J65}

%

\keywords{Riemannian submanifolds, extrinsic balls, torsional rigidity,
mean exit time, isoperimetric inequalities, Faber-Krahn
inequalities, Schwarz symmetrization}

\begin{abstract}
We prove explicit upper and lower bounds for the torsional
ri\-gi\-di\-ty of extrinsic domains of  submanifolds  $P^m$ with controlled radial mean curvature in
ambient Riemannian manifolds $N^n$ with a pole $p$ and with sectional curvatures bounded from above and from below, respectively. These  bounds
are given in terms of the torsional rigidities of corresponding
Schwarz symmetrizations of the domains in warped product model
spaces. Our main results are obtained using methods from previously established
isoperimetric inequalities, as found in e.g. \cite{MP4} and \cite{MP5}.
As in \cite{MP4} we also characterize the geometry of those situations in which the bounds for the torsional rigidity are actually attained and study the behavior at infinity of the so-called {\em geometric average} of the mean exit time for Brownian motion.
\end{abstract}

\maketitle

\section{Introduction}\label{secIntro}
\bigskip

Given a precompact domain $D$ in a complete Riemannian manifold $(M^{n},g)$, the {\em torsional rigidity of $D$} is defined as the integral

 \begin{equation}\label{eqApk}
    \mathcal{A}_{1}(D)= \int_D E(x) \, d\sigma
    \quad ,
    \end{equation}
    where $E$ is the smooth solution of the Dirichlet--Poisson equation
    \begin{equation}  \label{eqmoments1}
\begin{aligned}
\Delta^{M} E+1 &= 0\,\,\, \text{on}\,\,\, D\\
E\vert_{\partial D} &=0 \quad .
\end{aligned}
\end{equation}

 Here  $\Delta^M$ denotes the Laplace-Beltrami operator on $\,(M^{n},
g)\,$.
    The function $E(x)$ represents the mean time of first exit from $D$
    for a Brownian particle starting at the point $x$ in $D$, see
    \cite{Dy}.\\

    The name {\em torsional rigidity of $D$} stems from the fact that  if $D \subseteq \erre^2$,
    then $\mathcal{A}_1(D)$ represents the torque required per unit angle of twist and per unit length
    when twisting an elastic beam of
    uniform cross section $D$, see \cite{Ba} and  \cite{PS}.
    The torsional rigidity $\mathcal{A}_1(D)$  plays a r\^{o}le in the so-called $L^p$-moment spectrum
    for the domain $D$,
    see \cite{Mc}, which is similar to the r\^{o}le of the first positive Dirichlet eigenvalue,
    the {\em fundamental tone},
    in the Dirichlet spectrum for the domain $D$.\\

    As in  \cite {MP4} we consider a Saint-Venant type problem, namely, how to optimize the torsional rigidity among
    all the domains having the same given volume in a given space or in some otherwise fixed geometrical setting.
    Here we restrict ourselves to a particular class of subsets, namely  the extrinsic balls $D_R$ of a submanifold $P$
    immersed with controlled mean curvature into an ambient manifold with suitably bounded sectional curvatures.\\

    The proof of the Saint-Venant conjecture in the general context of Riemannian geometry makes use of the concept of
    {\em Schwarz--symmetrization} and like the Rayleigh conjecture concerning the fundamental tone it also hinges upon
    the proof of the Faber--Krahn inequality, which in turn is based on isoperimetric inequalities satisfied by the
    domains in question.\\

    Under extrinsic curvature restrictions on the submanifold and intrinsic curvature restrictions on the ambient
    manifold we show in Theorem \ref{thmIsopGeneral1} that the extrinsic balls  satisfy strong isoperimetric
    inequalities, specifically  lower and upper bounds for the $\infty$-isoperimetric quotient
    $\Vol(\partial D_R)/\Vol(D_R)$, where the bounds are given by corresponding $\infty$-isoperimetric
    quotients of certain geodesic balls in tailor-made warped product spaces.\\

    As in \cite{Pa2}, \cite{Ma1}, and \cite{MP4}, the comparison is obtained essentially by transplanting
    the radial solution of a Poisson equation defined in the radially symmetric model space from that model
    to the extrinsic $R$-balls $D_R$ in the submanifold $P$.\\

    Once we have this isoperimetric information at hand, we then apply it to get bounds for the torsional rigidity
    of the extrinsic balls. In \cite{MP4}, following these ideas and inspired by the work \cite{Mc}, we obtained
    upper bounds for the torsional rigidity of the extrinsic balls in a {\em minimal} submanifold of an ambient
    manifolds with sectional curvatures bounded from above.\\

    One key result on the way to upper and lower bounds for the torsional rigidity is Theorem \ref{eqschwarz},
    which shows a fundamental equality between the integral of the transplanted radial solution of the Poisson
    equation in $D_R$ and the corresponding integral of its Schwarz--symmetrization in the model space.\\

    As a consequence of the isoperimetric inequalities in Theorem \ref{thmIsopGeneral1} and the Schwarz symmetrization
    identity in Theorem \ref{eqschwarz}, we obtain {\em lower} and {\em upper} bounds for the torsional rigidity of
    the extrinsic balls in submanifolds of much more general type than the special minimal submanifolds alluded to
    above. In this general setting we only assume that the submanifold has {\em controlled} mean curvature and that
    the ambient manifolds have radial sectional curvatures bounded from below (Theorem \ref{thm2.1}) or from above
    (Theorem \ref{thm2.2}), respectively.\\

   In the work \cite{BBC} the existence of regions in $\erre^m$ with finite torsional rigidity and yet infinite
   volume were considered. To get to such regions, the authors assume Hardy inequalities for these domains. The
   geometric effect of this assumption is to make the volume of the boundary of the regions relatively large in
   comparison with the enclosed volume. In consequence, the Brownian diffusion process finds sufficient outlet-volume
   to escape at the boundary, giving in consequence a small mean exit time and at the same time a small incomplete
   integral of the mean exit time, i.e. a bounded torsional rigidity.\\

   Inspired by this result, the study of the  behaviour at infinity of the geometric average of the mean exit
   time for Brownian motion was initiated in \cite{MP4}. Specifically, given the quotient
   $\mathcal{A}_1(D_R)/\Vol(D_R)$, we may consider the limit of this quotient for $R \to \infty$ as a measure of
   the volume-relative swiftness (at infinity) of the Brownian motion defined on the entire submanifold. It was proved
   in \cite{MP4} that this quotient is unbounded for geodesic balls in all Euclidean spaces as
   $R \longrightarrow \infty$, while it is bounded for geodesic balls in simply connected space-forms of constant
   negative curvature. \\

   The Brownian motion of a particle from a point on a given manifold
is {\it recurrent} if the particle is sure to visit every open set
in the manifold as time goes by. The manifold is thence itself
called recurrent, because the property turns out to be independent
of the starting point. The simplest examples of recurrent
manifolds are the $1-$ and $2-$dimensional Euclidean spaces. If
the 'visiting' condition is not satisfied, then $M^{m}$ is called
{\it transient}. The $n-$dimensional Euclidean spaces are for $n\geq3$ the prime
examples of transient manifolds. We refer to \cite{MP2, MP3, MP5} for results concerning
general transience conditions for submanifolds. We note that transience is
not in itself sufficient to give finiteness of the
geometric average of the mean exit time, as is exemplified by $\mathbb{R}^{n}$ for all $n \geq 3$.\\

   In this paper, we establish  a set of curvature restrictions that do guarantee the finiteness of the average mean
   exit time at infinity, meaning that the Brownian diffusion process is moving relatively fast to infinity (see
   Corollary \ref{cor2}), and a {\em dual} version of this result, that is a set of curvature restrictions which
   guarantee in turn that the average mean exit time at infinity is infinite so that the diffusion is moving
   relatively slow to infinity (see Corollary \ref{cor1}).\\

Throughout this paper we assume that the ambient manifold
$N^n$ possesses a pole $p$ and that $N$ has its $p$-radial sectional curvatures $K_{p,N}(x)$ bounded
from below or from above, respectively, by the expression $-w''(r(x))/w(r(x))$,
which is precisely the formula for the radial sectional curvatures of a so-called
{\em $w$-model space} $\,M^{m}_{w}\,$. Such a model space is
defined as the warped product of a real interval with the standard
$(m-1)$-dimensional unit Euclidean sphere $S^{m-1}_{1}$ and
warping function $w(r)$ which satisfies the initial conditions
$w(0) = 0$, $w'(0) = 1$. A precise definition as well as further instrumental
properties of model spaces will be given in Section \ref{pre}
below.

\subsection*{Outline of the paper}
Section 2 is devoted to the precise definitions of extrinsic balls, the warped product spaces that we use as
models and to the description of the general set-up of our comparison analysis: the comparison constellations. In
sections 3 and 4 we formulate the isoperimetric inequalities and the integral equalities for the Schwarz--symmetrization
of the solution of the Poisson equation, respectively. The main comparison results for the Torsional Rigidity are
stated and proved in Section 5, and finally, in sections 6 and 7 we present an intrinsic analysis of these results
and consider the behavior of the {\em averaged mean exit time} at infinity, respectively.

\subsection*{Acknowledgements}
This work has been partially done during the stay of the third named author at the Department of Mathematics at
the Technical University of Denmark and at the Max Planck Institut f\"{u}r Mathematik in Bonn,  where he enjoyed
part of a sabbatical leave, funded by a grant of the Spanish Ministerio de Educaci\'on y Ciencia. He would like to
thank these institutions for their support during this period and to thank the staff of the Mathematics Department
at DTU and the MPIM for their cordial hospitality.

\section{Preliminaries and Comparison Setting}\label{pre}

We first consider a few conditions and concepts that will
be instrumental for establishing our results.

\subsection{The extrinsic balls and the curvature bounds} \label{subsecurvature}

We consider an immersed $m$-dimensional submanifold $P^m$ in a
complete Riemannian manifold $N^n$. Let $p$ denote a point in $P$
and assume that $p$ is a pole of the ambient manifold $N$. We
denote the distance function from $p$ in $N^{n}$ by $r(x) =
\dist_{N}(p, x)$ for all $x \in N$. Since $p$ is a pole there is -
by definition - a unique geodesic from $x$ to $p$ which realizes
the distance $r(x)$. We also denote by $r$ the restriction
$r\vert_P: P\longrightarrow \erre_{+} \cup \{0\}$. This
restriction is then called the extrinsic distance function from
$p$ in $P^m$. The corresponding extrinsic metric balls of
(sufficiently large) radius $R$ and center $p$ are denoted by
$D_R(p) \subseteq P$ and defined as any connected component which
contains $p$ of the set:
$$D_{R}(p) = B_{R}(p) \cap P =\{x\in P \,|\, r(x)< R\} \quad ,$$
where $B_{R}(p)$ denotes the geodesic $R$-ball around the pole $p$ in $N^n$. The
 extrinsic ball $D_R(p)$ is a connected domain in $P^m$, with
boundary $\partial D_{R}(p)$. Since $P^{m}$ is assumed to be unbounded in $N$ we
have for every sufficiently large $R$ that $B_{R}(p) \cap P \neq P$.
\bigskip

We now present the curvature restrictions which constitute the geometric framework of our investigations.

\begin{definition}
Let $p$ be a point in a Riemannian manifold $M$
and let $x \in M-\{ p \}$. The sectional
curvature $K_{M}(\sigma_{x})$ of the two-plane
$\sigma_{x} \in T_{x}M$ is then called a
\textit{$p$-radial sectional curvature} of $M$ at
$x$ if $\sigma_{x}$ contains the tangent vector
to a minimal geodesic from $p$ to $x$. We denote
these curvatures by $K_{p, M}(\sigma_{x})$.
\end{definition}

In order to control the mean curvatures $H_P(x)$ of $P^{m}$ at distance $r$ from
$p$ in $N^{n}$ we introduce the following definition:

\begin{definition} The $p$-radial mean curvature function for $P$ in $N$
is defined in terms of the inner product of $H_{P}$ with the $N$-gradient of the
distance function $r(x)$ as follows:
$$
\mathcal{C}(x) = -\langle \nabla^{N}r(x),
H_{P}(x) \rangle  \quad {\textrm{for all}}\quad x
\in P \,\, .
$$
\end{definition}

In the following definition, we are going to generalize the notion of {\em radial mean convexity condition}
introduced in \cite{MP5}.

\begin{definition} (see \cite{MP5})
We say that the submanifold $P$ satisfies a {\em{radial mean convexity condition
from below}} (respectively, {\em{from above}}) from the point $p \in P$ when there exists
a radial smooth function $h(r)$, (that we call {\em a bounding function}), which satisfies
one of the following inequalities
\begin{equation}\label{eqH}
\begin{aligned}
\mathcal{C}(x)& \geq h(r(x))\,  {\textrm{for
all}}\,\,  x \in P\,\,\quad {\textrm{($h$ bounds {\it from below}})}\\
\mathcal{C}(x)& \leq h(r(x))\,  {\textrm{for
all}}\,\,  x \in P\,\,\quad {\textrm{($h$ bounds {\it from above}})}
\end{aligned}
\end{equation}
\end{definition}

The radial bounding function $h(r)$ is related with the global extrinsic
geometry of the submanifold. For example, it is obvious that  minimal submanifolds satisfy a radial mean convexity
condition from above and from below, with bounding function $h=0$. On the other hand, it can be proved,
see the works \cite{Sp}, \cite{DCW}, \cite{Pa1} and \cite{MP5},  that when the submanifold is a convex hypersurface,
then the constant function $h(r)=0$ is
a radial bounding function from below.\\

The final notion needed to describe our comparison setting is the idea of {\it radial tangency}.
If we denote by $\nabla^N r$ and $\nabla^P r$ the
 gradients of
$r$ in $N$ and $P$ respectively, then
we have the following basic relation:
\begin{equation}\label{eq2.1}
\nabla^N r = \nabla^P r +(\nabla^N r)^\bot \quad ,
\end{equation}
where $(\nabla^N r)^\bot(q)$ is perpendicular to $T_qP$ for all $q\in P$.\\

When the submanifold $P$ is totally geodesic, then $\nabla^N r=\nabla^P r$ in all points,
and, hence, $\Vert \nabla^P r\Vert =1$. On the other hand, and given the starting
point $p \in P$, from which we are measuring the distance $r$, we know that
$\nabla^N r(p)=\nabla^P r(p)$, so $\Vert \nabla^P r(p)\Vert =1$.
Therefore, the difference  $1 - \Vert \nabla^P r\Vert$ quantifies the radial {\em detour}
of the submanifold with respect the ambient manifold as seen
from the pole $p$. To control this detour locally, we apply the following

\begin{definition}

 We say that the submanifold $P$ satisfies a {\it radial tangency
 condition} at $p\in P$ when we have a smooth positive function
 $$g: P \mapsto \erre_{+} \,\, ,$$ so that
\begin{equation}
\mathcal{T}(x) \, = \, \Vert \nabla^P r(x)\Vert
\geq g(r(x)) \, > \, 0  \quad {\textrm{for all}}
\quad x \in P \,\, .
\end{equation}
\end{definition}
\begin{remark}

Of course, we always have

\begin{equation}
\mathcal{T}(x) \, =\, \Vert \nabla^P r(x)\Vert
\leq1 \quad {\textrm{for all}}
\quad x \in P \,\, .
\end{equation}
\end{remark}

\subsection{Model Spaces} \label{secModel}

As mentioned previously, the model spaces $M^m_w$ serve foremost
as com\-pa\-ri\-son controllers for the radial sectional
curvatures of $N^{n}$.

\begin{definition}[See \cite{Gri}, \cite{GreW}]
 A $w-$model $M_{w}^{m}$ is a
smooth warped product with base $B^{1} = [\,0,\, R[ \,\,\subset\,
\mathbb{R}$ (where $\, 0 < R \leq \infty$\,), fiber $F^{m-1} =
S^{m-1}_{1}$ (i.e. the unit $(m-1)-$sphere with standard metric),
and warping function $w:\, [\,0, \,R[\, \to \mathbb{R}_{+}\cup
\{0\}\,$ with $w(0) = 0$, $w'(0) = 1$, and $w(r) > 0\,$ for all
$\, r > 0\,$. The point $p_{w} = \pi^{-1}(0)$, where $\pi$ denotes
the projection onto $B^1$, is called the {\em{center point}} of
the model space. If $R = \infty$, then $p_{w}$ is a pole of
$M_{w}^{m}$.
\end{definition}

\begin{remark}\label{propSpaceForm}
The simply connected space forms $\mathbb{K}^{m}(b)$ of constant
curvature $b$ can be constructed as  $w-$models with any given
point as center point using the warping functions
\begin{equation}
w(r) = Q_{b}(r) =\begin{cases} \frac{1}{\sqrt{b}}\sin(\sqrt{b}\, r) &\text{if $b>0$}\\
\phantom{\frac{1}{\sqrt{b}}} r &\text{if $b=0$}\\
\frac{1}{\sqrt{-b}}\sinh(\sqrt{-b}\,r) &\text{if $b<0$} \quad .
\end{cases}
\end{equation}
Note that for $b > 0$ the function $Q_{b}(r)$ admits a smooth
extension to  $r = \pi/\sqrt{b}$. For $\, b \leq 0\,$ any center
point is a pole.
\end{remark}

In the papers \cite{O'N}, \cite{GreW}, \cite{Gri}, \cite{MP3} and \cite{MP4}, we have a complete description of
these model spaces, including the computation of its sectional curvature in the radial directions from the center
point, the mean curvature of the distance spheres from the center point and the volume of this spheres and the
corresponding balls.\\

In particular, in \cite{MP4} we introduced,
for any given warping function $\,w(r)\,$, the
isoperimetric quotient function $\,q_{w}(r)\,$  for the
corresponding $w-$mo\-del space $\,M_{w}^{m}\,$ as follows:
\begin{equation} \label{eqDefq}
q_{w}(r) \, = \, \frac{\Vol(B_{r}^{w})}{\Vol(S_{r}^{w})} \, = \,
\frac{\int_{0}^{r}\,w^{m-1}(t)\,dt}{w^{m-1}(r)} \quad .
\end{equation}

Then, we have the following result concerning the mean exit time
function and the torsional rigidity of a geodesic $R$-ball $B^w_R
\subseteq M^m_w$ in terms of $q_{w}$, see \cite{MP4}:

\begin{proposition}\label{propW1} Let $E_{R}^{w}$ be the solution of the Poisson Problem (\ref{eqmoments1}),
defined on the geodesic $R$-ball $B^w_R$ in the  model space $M^m_w$.

Then
\begin{equation}\label{eqEwR}
E_{R}^{w}(r) \, = \, \int_{r}^{R}\,q_{w}(t) \,dt ,
\end{equation}
and
\begin{equation}
\mathcal{A}_1(B^w_R)=\int_{B^w_R} E^w_R \, d\tilde\sigma=V_0\int_0^R
w^{m-1}(r)\Big(\int_r^R q_w(t)\, dt\Big)\, dr ,
\end{equation}
where $V_{0}$ is the volume of the unit sphere $S^{m-1}_{1}$.
Differentiating with respect to $R$ gives
\begin{equation}\label{derivative2}
\frac{d}{dR}\mathcal{A}_1(B^w_R)=q^2_w(R)\Vol(S^w_R),
\end{equation}
and an integration of the latter equality, gives us the following
alternative expression for the torsional rigidity:
\begin{equation}
\mathcal{A}_1(B^w_R)=\int_{B^w_R} q^2_w \, d\tilde\sigma.
\end{equation}
\end{proposition}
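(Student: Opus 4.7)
The plan is to use the rotational symmetry of $B_R^w$ about $p_w$ to reduce the Poisson problem to a one-dimensional ODE, and then obtain the several expressions for $\mathcal{A}_1(B_R^w)$ by direct integration together with a single application of Fubini's theorem.

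First, since $B_R^w$, the source term $-1$, and the Dirichlet boundary condition are all invariant under the rotations that fix $p_w$, the unique solution $E_R^w$ of (\ref{eqmoments1}) is itself radial, $E_R^w = E_R^w(r)$. On a $w$-model the Laplace--Beltrami operator applied to a radial function has the standard form
\[
\Delta f(r) \,=\, \frac{1}{w^{m-1}(r)}\,\bigl(w^{m-1}(r)\,f'(r)\bigr)',
\]
so that $\Delta E_R^w + 1 = 0$ reads $\bigl(w^{m-1}\,(E_R^w)'\bigr)'(r) = -w^{m-1}(r)$. Integrating once from $0$ to $r$ and using that $E_R^w$ must be smooth at the center (which, together with $w(0)=0$, forces the integration constant to vanish) yields $(E_R^w)'(r) = -q_w(r)$. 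A second integration from $r$ to $R$ combined with $E_R^w(R) = 0$ gives formula (\ref{eqEwR}).

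Next, I would use that the volume element of $M_w^m$ splits as $d\tilde\sigma = w^{m-1}(r)\,dr\,dS^{m-1}_{1}$, so that integrating out the sphere factor produces
\[
\mathcal{A}_1(B_R^w) \,=\, \int_{B_R^w} E_R^w\,d\tilde\sigma \,=\, V_0\int_0^R w^{m-1}(r)\,\Bigl(\int_r^R q_w(t)\,dt\Bigr)\,dr,
\]
which is the middle identity. Swapping the order of integration by Fubini and using that $\int_0^t w^{m-1}(r)\,dr = w^{m-1}(t)\,q_w(t)$ by the very definition of $q_w$, the same double integral equals $V_0\int_0^R q_w^2(t)\,w^{m-1}(t)\,dt$, which is precisely $\int_{B_R^w} q_w^2\,d\tilde\sigma$. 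Finally, differentiating either expression with respect to $R$ produces $V_0\,q_w^2(R)\,w^{m-1}(R) = q_w^2(R)\,\Vol(S_R^w)$, which is (\ref{derivative2}).

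There is no substantive obstacle: the argument is essentially a one-dimensional computation plus Fubini. The only subtlety is checking the regularity of the radial solution at $p_w$ (to dismiss the constant of integration from the first ODE integration), which is a standard fact for radial functions on $w$-models whenever $w(0)=0$ and $w'(0)=1$.
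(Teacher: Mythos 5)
Your proof is correct. The paper itself does not include a proof of this proposition---it is stated and cited from [MP4]---but your argument is the standard one: radial symmetry of the solution, the explicit form of $\Delta$ on radial functions in a $w$-model, two integrations using smoothness at the center and the Dirichlet condition to fix the constants, and then polar-coordinate integration with a Fubini swap (or equivalently, as the statement itself suggests, differentiation in $R$ followed by integration) to pass between the two integral expressions for $\mathcal{A}_1(B_R^w)$. The only point worth flagging is the one you already flag: the vanishing of the boundary term $w^{m-1}(0)(E_R^w)'(0)$ uses both $w(0)=0$ and the boundedness of $(E_R^w)'$ near the center, which you correctly attribute to regularity of the radial solution at $p_w$.
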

\begin{remark}\label{non-decreasing}
Since $q_w(r)>0$, it follows from (\ref{eqEwR}) that for fixed
$r$, the mean exit time function $E^w_R(r)$ is an increasing
function of $R$. Furthermore, if $q'_w(r)\geq 0$, then the average
mean exit time $\mathcal{A}_1(B^w_r)/\Vol (B^w_r)$ is also a
non-decreasing function of $r$.
\end{remark}

\subsection{The Isoperimetric Comparison space}

\label{secIsopCompSpace} Given the bounding functions $g(r)$,
$h(r)$ and the ambient curvature controller function $w(r)$ described is Subsections \ref{subsecurvature} and
\ref{secModel}, we
construct a new model space $C^{\,m}_{w, g, h}\,$, which
eventually will serve as the precise comparison space for the
isoperimetric quotients of extrinsic balls in $P$.

\begin{definition}\label{stretchingfunct}
Given a smooth positive function
$$g: P \mapsto \erre_{+} \,\, ,$$
satisfying $g(0)=1$ and $g(r(x))\leq 1\,\,{\textrm{for all \,}} x \in P$, a 'stretching' function $s$ is defined
as follows
\begin{equation}\label{eqstretching}
s(r) \, = \, \int_{0}^{r}\,\frac{1}{g(t)} \, dt \quad .
\end{equation}
It  has a well-defined inverse $r(s)$ for $s \in [\,0, s(R)\,]$
with derivative $r'(s) \, = \, g(r(s))$. In particular $r'(0)\, =
\, g(0) \, = \, 1$.
\end{definition}

\begin{definition}[\cite{MP5}] \label{defCspace}
The {\em{isoperimetric comparison space}} $C^{\,m}_{w, g, h}\,$ is
the $W-$model space with base interval $B\,= \, [\,0, s(R)\,]$ and
warping function $W(s)$ defined by
\begin{equation}
W(s) \, = \, \Lambda^{\frac{1}{m-1}}(r(s)) \quad ,
\end{equation}
where the auxiliary function $\Lambda(r)$ satisfies the following differential
equation:
\begin{equation} \label{eqLambdaDiffeq}
\begin{aligned}
\frac{d}{dr}\,\{\Lambda(r)w(r)g(r)\} \, &= \, \Lambda(r)w(r)g(r)\left(\frac{m}{g^{2}(r)}\left(\eta_{w}(r) - h(r) \right)\right) \\
&= \, m\,\frac{\Lambda(r)}{g(r)}\left(w'(r) - h(r)w(r)
\right)\quad.
\end{aligned}
\end{equation}
and the following boundary condition:
\begin{equation} \label{eqTR}
\frac{d}{dr}_{|_{r=0}}\left(\Lambda^{\frac{1}{m-1}}(r)\right) = 1
\quad .
\end{equation}

\end {definition}

We observe, that in spite of its relatively complicated
construction, $C^{\,m}_{w, g, h}\,$ is indeed a model space $M^m_W$ with a
well defined pole $p_{W}$ at $s = 0$: $W(s) \geq 0$ for all $s$
and $W(s)$ is only $0$ at $s=0$, where also, because of the
explicit construction in definition \ref{defCspace} and because of
 equation (\ref{eqTR}):  $W'(0)\, =
\, 1\,$.\\

Of course as a model space constructed in this way $M^m_W=C_{w,g,h}^{m}$ inherits all its properties
from the bounding functions $w$, $g$, and $h$ from which it is molded
in the first place. Note that, when $g(r)=1 \,\,\,{\textrm{for all \,}} r$ and $h(r)=0\,\,\,{\textrm{for all \,\,}} r$,
then
the stretching function $s(r)=r$ and $W(s(r))=w(r) \,\,\,{\textrm{for all \,}} r$, so $C_{w,g,h}^{m}$ becomes
a model space with warping function $w$, $M^m_w$.\\

 Concerning the associated volume growth
properties we note the following expressions for the isoperimetric
quotient function:

\begin{proposition} \label{propIsopFct}
Let $B_{s}^{W}(p_{W})$ denote the metric ball of radius $s$
centered at $p_{W}$ in $C^{m}_{w,g,h}$. Then the corresponding
isoperimetric quotient function is
\begin{equation} \label{eqIsopFunction}
\begin{aligned}
q_{W}(s) \, &= \, \frac{\Vol(B_{s}^{W}(p_{W}))}{\Vol(\partial
B_{s}^{W}(p_{W}))} \, \\ &= \,
\frac{\int_{0}^{s}\,W^{m-1}(t)\,dt}{W^{m-1}(s)}\,
\\ &= \,
\frac{\int_{0}^{r(s)}\,\frac{\Lambda(u)}{g(u)}\,du}{\Lambda(r(s))}
\quad .
\end{aligned}
\end{equation}
\end{proposition}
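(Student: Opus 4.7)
The proof is essentially a direct computation, splitting the claimed identity into three steps corresponding to the three expressions for $q_W(s)$. I would not expect any deep obstacle here; the argument is a matter of unpacking the definitions of the comparison space $C^m_{w,g,h}$ and of the stretching function $s(r)$.

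The plan is as follows. First, I would note that the first equality is simply the definition of the isoperimetric quotient function for a model space, applied to the particular $W$-model $C^m_{w,g,h} = M^m_W$; this is the same definition used in \eqref{eqDefq} for $q_w$. Second, for the equality
\begin{equation*}
\frac{\Vol(B_s^W(p_W))}{\Vol(\partial B_s^W(p_W))} = \frac{\int_0^s W^{m-1}(t)\,dt}{W^{m-1}(s)},
\end{equation*}
I would invoke the standard volume formulas in a $w$-model, namely $\Vol(B_s^W(p_W)) = V_0 \int_0^s W^{m-1}(t)\,dt$ and $\Vol(\partial B_s^W(p_W)) = V_0\, W^{m-1}(s)$, where $V_0$ is the volume of the unit sphere $S_1^{m-1}$. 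These formulas are recorded in \cite{Gri}, \cite{GreW} and in Proposition \ref{propW1}. Dividing gives the middle expression.

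Third, I would carry out the change of variables $u = r(t)$ inside the integral in the numerator. By Definition \ref{stretchingfunct}, $s = s(r)$ has derivative $s'(r) = 1/g(r)$, so its inverse $r(s)$ satisfies $r'(s) = g(r(s))$. Setting $u = r(t)$, we have $du = g(u)\,dt$, i.e. $dt = du/g(u)$, and the substitution sends $t = 0$ to $u = 0$ and $t = s$ to $u = r(s)$. Combined with $W^{m-1}(t) = \Lambda(r(t)) = \Lambda(u)$ from Definition \ref{defCspace}, this gives
\begin{equation*}
\int_0^s W^{m-1}(t)\,dt = \int_0^{r(s)} \frac{\Lambda(u)}{g(u)}\,du,
\end{equation*}
while $W^{m-1}(s) = \Lambda(r(s))$ in the denominator. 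This yields the third identity.

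The only mildly delicate point is to keep track of which variable lives in the $W$-model base ($s$) and which lives in the original $r$-coordinate; once the substitution $u = r(t)$ is set up correctly via the stretching function, the formula falls out immediately. No curvature or geometric hypothesis is needed beyond the construction of $C^m_{w,g,h}$ itself; in particular the differential equation \eqref{eqLambdaDiffeq} plays no role in this proposition, since $\Lambda$ appears here only through the identity $W^{m-1} = \Lambda \circ r$.
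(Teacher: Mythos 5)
Your proof is correct and is exactly the argument the paper implicitly intends: the paper states Proposition \ref{propIsopFct} without proof, treating it as an immediate consequence of the model-space volume formulas, the definition $W^{m-1}(s)=\Lambda(r(s))$, and the change of variables $u=r(t)$ via the stretching function. Your three-step unpacking, including the substitution $du=g(u)\,dt$, matches this reading precisely, and you are right that the ODE \eqref{eqLambdaDiffeq} plays no role here.
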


\begin{remark}\label{remarkone}
When $g(r)=1\,\,{\textrm{for all \,}} r$, the stretching function is
$s(r)=r\,\,{\textrm{for all \,}} r$, and hence
\begin{equation}
\begin{aligned}
q_{W}(s)&=q_{W}(r)\\&=\frac{\Vol(B_{r}^{W}(p_{W}))}{\Vol(\partial
B_{r}^{W}(p_{W}))}=\frac{\int_{0}^{r}\,\Lambda(u)\,du}{\Lambda(r)}
\quad .
\end{aligned}
\end{equation}

\end{remark}

 These are the spaces where the isoperimetric
bounds and the bounds on the torsional rigidity are attained. We
shall refer to the $W$-model spaces $M^{m}_{W} = C_{w,g,h}^{m}$ as the  {\em
isoperimetric comparison spaces}.

\subsection{Balance conditions}

In the paper \cite{MP4} we imposed  two further purely intrinsic
conditions on the general model spaces $M^m_w$:
\begin{definition}
A given $w-$model space $\, M^{m}_{w}\,$ is
{\em  balanced from below}  if
the following weighted isoperimetric condition is satisfied:

\begin{equation}
q_{w}(r)\,\eta_{w}(r) \, \geq 1/m \quad \text{for all} \quad r
\geq 0 \quad ,
\end{equation}
and is {\em balanced from above} if we have the inequality
\begin{equation}
 q_{w}(r)\,\eta_{w}(r) \,\leq
1/(m-1) \quad \text{for all} \quad r \geq 0 \quad .
\end{equation}

A model space is called {\em{totally balanced}} if it is balanced
both from below and from above.

The model space $M^m_w$ is easily seen to be balanced from below iff
\begin{equation}\label{diffcondition2}
\frac{d}{dr}\left(\frac{q_w(r)}{w(r)}\right)\leq 0\,\quad \text{for all} \quad r \geq
0\quad ,
\end{equation}
and balanced from above iff
\begin{equation}\label{diffcondition3}
\frac{d}{dr}\left(q_w(r)\right) \geq 0 \,\quad\text{for all} \quad r \geq 0\quad .
\end{equation}
\end{definition}

To play the comparison setting r\^{o}le in our present setting, the isoperimetric comparison spaces  must
satisfy similar types of balancing conditions:

\begin{definition} \label{defBalCond}
The model space $M_{W}^{m} \, = \, C_{w, g, h}^{m}$ is
{\em{$w-$balanced from below}} (with respect to the intermediary model space $M_{w}^{m}$) if
the following holds for all $r \in \, [\,0, R\,]$, resp. all $s \in \, [\,0, s(R)\,]$:
\begin{equation}\label{eqBalA}
q_{W}(s)\left(\eta_{w}(r(s)) - h(r(s)) \right) \, \geq g(r(s))/m \quad .
\end{equation}
\end{definition}

\begin{lemma}\label{diffcondLemma}
The model space $M_{W}^{m} \, = \, C_{w, g, h}^{m}$ is
{\em{$w-$balanced from below}} iff
\begin{equation}\label{diffcondition}
\frac{d}{dr} \left(\frac{q_W(s(r))}{g(r) w(r)}\right)\leq 0\quad .
\end{equation}
\end{lemma}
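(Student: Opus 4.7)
The plan is to compute the derivative in (\ref{diffcondition}) directly from the integral representation of $q_W$ in Proposition \ref{propIsopFct}, substitute the ODE (\ref{eqLambdaDiffeq}) defining $\Lambda$, and check that the resulting sign condition is literally the inequality (\ref{eqBalA}).

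First I would rewrite the quantity to be differentiated in terms of $\Lambda$, $g$, $w$ only. Since
\[
q_W(s(r)) \, = \, \frac{\int_{0}^{r}\frac{\Lambda(u)}{g(u)}\,du}{\Lambda(r)},
\]
we have
\[
\frac{q_W(s(r))}{g(r)w(r)} \, = \, \frac{\int_{0}^{r}\frac{\Lambda(u)}{g(u)}\,du}{\Lambda(r)\,g(r)\,w(r)}.
\]
Next I would differentiate by the quotient rule. The numerator derivative is $\Lambda(r)/g(r)$, and the denominator derivative $\frac{d}{dr}\bigl(\Lambda(r)g(r)w(r)\bigr)$ can be replaced by
\[
\Lambda(r)\,w(r)\,g(r)\cdot\frac{m}{g^{2}(r)}\bigl(\eta_{w}(r)-h(r)\bigr)
\]
using (\ref{eqLambdaDiffeq}). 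After a cancellation of one factor of $\Lambda(r)w(r)$ in the numerator, one is left with
\[
\frac{d}{dr}\!\left(\frac{q_W(s(r))}{g(r)w(r)}\right) \, = \, \frac{1}{\Lambda(r)\,g^{2}(r)\,w(r)}\left(\Lambda(r)-\frac{m\bigl(\eta_{w}(r)-h(r)\bigr)}{g(r)}\int_{0}^{r}\frac{\Lambda(u)}{g(u)}\,du\right).
\]

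Since the prefactor $\frac{1}{\Lambda(r)g^{2}(r)w(r)}$ is strictly positive for $r>0$, the inequality (\ref{diffcondition}) is equivalent to the bracket being non-positive, i.e.\
\[
\frac{\int_{0}^{r}\Lambda(u)/g(u)\,du}{\Lambda(r)}\bigl(\eta_{w}(r)-h(r)\bigr)\,\geq\,\frac{g(r)}{m},
\]
which is exactly $q_W(s(r))\bigl(\eta_{w}(r)-h(r)\bigr)\geq g(r)/m$, i.e.\ the $w$-balance-from-below condition (\ref{eqBalA}). Hence the two conditions are equivalent on $(0,R]$, and continuity at $r=0$ handles the endpoint.

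The only real obstacle is careful bookkeeping in applying the quotient rule together with (\ref{eqLambdaDiffeq}); conceptually the lemma is just the observation that the ODE defining $\Lambda$ was precisely designed so that the monotonicity of $q_W/(gw)$ records the balancing inequality.
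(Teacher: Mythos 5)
Your proposal is correct and follows essentially the same route as the paper's own proof: differentiate $q_W(s(r))/(g(r)w(r))$ using the integral expression for $q_W$ together with the defining ODE for $\Lambda$, and observe that the sign condition on the bracketed term is exactly the balance inequality (\ref{eqBalA}). The only cosmetic difference is that the paper leaves the bracket in the form $\Lambda w g - mN(w'-hw)$ (so that the positive prefactor has a $g^3w^2$ in the denominator) while you cancel one extra factor of $g$ and $w$; the content is identical.
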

\begin{proof}
A direct differentiation using (\ref{eqIsopFunction}) but  with respect to $r$ amounts to:
\begin{equation*}
\begin{aligned}
&\frac{d}{dr}\left(\frac{q_{W}(s(r))}{g(r)w(r)}\right)\\
&=  \frac{1}{\Lambda(r)g^{3}(r)w^{2}(r)}\left(\Lambda(r)w(r)g(r) - m\left(\int_{0}^{r}\,\frac{\Lambda(t)}{g(t)}\,dt\right)\left(w'(r)
-  h(r)w(r) \right)    \right) \quad ,
\end{aligned}
\end{equation*}
which shows that inequality (\ref{diffcondition}) is equivalent to inequality
\begin{equation}\label{eqBalanceExpand2}
\Lambda(r)w(r)g(r) - m\left(\int_{0}^{r}\,\frac{\Lambda(t)}{g(t)}\,dt\right)\left(w'(r)
-  h(r)w(r) \right)\leq 0 ,
\end{equation}
which is, in turn, using (\ref{eqIsopFunction}), equivalent to inequality (\ref{eqBalA}).
\end{proof}

\begin{remark}
 In particular the $w$-balance condition from below  for $M_{W}^{m} \, = \, C_{w, g, h}^{m}$ implies that
\begin{equation} \label{eqEtaVSh}
\eta_{w}(r) \, - h(r) \, > \, 0 \quad .
\end{equation}
\end{remark}

\begin{remark}
The above definition of $w-$balance condition from below for $M_{W}^{m}$ is clearly an extension of the balance
condition from below as defined in \cite[Definition 2.12]{MP4}. The condition in that paper
is obtained precisely when $g(r) \, = \, 1$ and $h(r) \, = \, 0$
for all $r \in [\,0, R]\,$ so that $r(s) \, =\, s$, $W(s)\, = \,
w(r)$, and
\begin{equation}
q_{w}(r)\eta_{w}(r)\, \geq 1/m \quad .
\end{equation}
We observe that the differential inequality (\ref{diffcondition})
becomes (\ref{diffcondition2}) when $g(r) \, = \, 1$ and $h(r) \,
= \, 0$.
\end{remark}

As defined previously a general $w$-model space is {\em totally balanced} if it
balanced from below and from above in the sense of equations (\ref{diffcondition2}) and (\ref{diffcondition3}).
In the same way, for our present purpose, an isoperimetric comparison space $M^m_W$ can be
$w$-balanced from below in the sense of Definition \ref{defBalCond} and, moreover, considered itself as a model space,
it can be
$W-$balanced from above. In fact, these two conditions are the balancing
conditions which must be satisfied by the isoperimetric comparison
spaces in Theorems \ref{thm2.1} and \ref{thm2.2}. There are plenty of
comparison spaces $M_{W}^{m} \, = \, C_{w, g, h}^{m}$ which satisfy both conditions. Indeed, if we
differentiate equation (\ref{eqIsopFunction}) and infer the balance conditions (\ref{eqBalA})
and  $q'_W(s) \geq 0$ we get:

\begin{lemma}\label{doubleBound}
Suppose that
\begin{equation} \label{eqDoubleBound}
m(\eta_w(r(s))-h(r(s)))-g^2(r(s))\eta_w(r(s))-g(r(s))g'(r(s))>0 .
\end{equation}
Then the  isoperimetric comparison space $M_{W}^{m} \, = \,C^m_{w,g,h}$ is $w$-balanced from
below and $W-$balanced from above if and only if
\begin{equation*}
\begin{aligned}
&\frac{g(r(s))}{m(\eta_w(r(s))-h(r(s)))}\leq q_W(s)\leq\\
& \frac{g(r(s))}{m(\eta_w(r(s))-h(r(s)))-g^2(r(s))\eta_w(r(s))-g(r(s))g'(r(s))} \quad ,
\end{aligned}
\end{equation*}
which can only be satisfied by some $W(s)$ if
\begin{equation*}
 g(r)\eta_w(r) + g'(r) \, \geq \, 0 \quad{\textrm{for all \,}} r > 0 \quad .
 \end{equation*}
  On the other hand, the conditions for balance from below and balance from above
  (for standard $w-$model spaces $M^{m}_{w}$) are both open conditions on $w(r)$ in the sense
  that for these special cases where $h(r) = 0$ and $g(r) = 1$ there are several warping functions $w(r)$
  satisfying the sharp balance conditions (with strict inequalities) as well as the
  condition (\ref{eqDoubleBound}), see \cite[Observation 3.12 and Examples 3.13]{MP4}. The continuity
  of $q_W(s)$ in terms of $h(r)$, $g(r)$ and $w(r)$ then guarantees that the space of functions satisfying
  all the inequalities above is non-empty.
 \end{lemma}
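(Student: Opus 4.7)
My plan is to turn each of the two balance conditions into an explicit bound on $q_W(s)$ and then compare the resulting endpoints.

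The lower bound is just a rearrangement of (\ref{eqBalA}). Since $q_W(s)>0$ and $g(r(s))>0$, the inequality (\ref{eqBalA}) forces $\eta_w(r(s))-h(r(s))>0$ (this is the remark following Lemma \ref{diffcondLemma}); dividing (\ref{eqBalA}) through by $m(\eta_w(r(s))-h(r(s)))$ yields exactly
$$q_W(s)\ge\frac{g(r(s))}{m(\eta_w(r(s))-h(r(s)))},$$
and the manipulation is clearly reversible. Hence $w$-balance from below is equivalent to the stated lower bound.

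For the upper bound I view $M^m_W$ as a standard $W$-model space. Then, by (\ref{diffcondition3}) applied to $M^m_W$ itself, $W$-balance from above is equivalent to $q_W'(s)\ge 0$, which, as for any model space, is equivalent to $(m-1)\,q_W(s)\,\eta_W(s)\le 1$. The main step is therefore to express $\eta_W(s)=W'(s)/W(s)$ in terms of $g$, $w$ and $h$. From $W(s)=\Lambda^{1/(m-1)}(r(s))$ together with $r'(s)=g(r(s))$, the chain rule yields
$$\eta_W(s)=\frac{g(r(s))\,\Lambda'(r(s))}{(m-1)\,\Lambda(r(s))}.$$
Then I expand the left-hand side of the defining ODE (\ref{eqLambdaDiffeq}) by the product rule and solve for $\Lambda'/\Lambda$ to obtain
$$\frac{g(r)\,\Lambda'(r)}{\Lambda(r)}=\frac{m(\eta_w(r)-h(r))}{g(r)}-g(r)\,\eta_w(r)-g'(r).$$
Substituting this into $(m-1)q_W(s)\eta_W(s)\le 1$ and multiplying by $g>0$ gives $q_W(s)[m(\eta_w-h)-g^2\eta_w-gg']\le g$; under the hypothesis (\ref{eqDoubleBound}) the bracket is positive, so dividing by it recovers the stated upper bound, and the step is reversible.

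Finally, the double bound is satisfiable by some $q_W(s)$ precisely when the lower endpoint does not exceed the upper one. Both denominators are positive (the smaller by the previous paragraph, the larger by hypothesis (\ref{eqDoubleBound})), so cross-multiplication reduces the compatibility condition to $g^2\,\eta_w+g\,g'\ge 0$, i.e.\ to $g(r)\,\eta_w(r)+g'(r)\ge 0$ since $g>0$. The only substantive obstacle in the argument is the computation of $\eta_W$ through (\ref{eqLambdaDiffeq}); once that expression is in hand, everything else is an algebraic rearrangement already foreshadowed by Lemma \ref{diffcondLemma}.
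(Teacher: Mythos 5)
Your proof is correct and follows essentially the route the paper only sketches (the paper introduces the lemma with the remark that it follows ``if we differentiate equation (\ref{eqIsopFunction}) and infer the balance conditions (\ref{eqBalA}) and $q'_W(s)\geq 0$''). You supply the details cleanly: the lower endpoint is a direct rearrangement of the $w$-balance from below inequality (\ref{eqBalA}); the upper endpoint comes from writing $W$-balance from above as $(m-1)\,q_W(s)\,\eta_W(s)\le 1$, computing $\eta_W(s) = \frac{g(r(s))\Lambda'(r(s))}{(m-1)\Lambda(r(s))}$ from the definition of $W$, and eliminating $\Lambda'/\Lambda$ via the defining ODE (\ref{eqLambdaDiffeq}) expanded by the product rule; hypothesis (\ref{eqDoubleBound}) guarantees the divisor is positive so the step is an equivalence; and comparing the two endpoints reduces, after cancelling the common positive numerator and cross-multiplying the positive denominators, to $g\,\eta_w + g'\ge 0$. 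The only piece you do not address is the final sentence of the lemma (non-emptiness of the class of admissible $W$), but the paper itself disposes of that by citation to \cite[Observation 3.12 and Examples 3.13]{MP4} together with a continuity remark, so this is not a gap in your argument.
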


\subsection{Comparison Constellations}

We now present the precise settings
where our main results take place, introducing the notion of {\em comparison constellations}.
For that purpose we shall bound the
previously introduced notions of radial curvature and tangency
by the corresponding quantities attained in some special model
spaces, called {\em isoperimetric comparison spaces} to be defined in the next subsection.

\begin{definition}\label{defConstellatNew1}
Let $N^{n}$ denote a complete Riemannian manifold with a
pole $p$ and distance function $r \, = \, r(x) \,
= \, \dist_{N}(p, x)$. Let $P^{m}$ denote an
unbounded complete and closed submanifold in
$N^{n}$. Suppose $p \in P^m$ and suppose that the following  conditions are
satisfied for all $x \in P^{m}$ with $r(x) \in
[\,0, R]\,$:
\begin{enumerate}[(a)]
\item The $p$-radial sectional curvatures of $N$ are bounded from below
by the $p_{w}$-radial sectional curvatures of of
the $w-$model space $M_{w}^{m}$:
$$
\mathcal{K}(\sigma_{x}) \, \geq \,
-\frac{w''(r(x))}{w(r(x))} \quad .
$$

\item The $p$-radial mean curvature of $P$ is bounded from below by
a smooth radial function $h(r)$, ($h$ is a {\it radial convexity function}):
$$
\mathcal{C}(x)  \geq h(r(x)) \quad.
$$

\item The submanifold $P$ satisfies a {\it radial tangency
 condition} at $p\in P$, with smooth positive function $g$ i.e. we have a smooth positive function
 $$g: P \mapsto \erre_{+} \,\, ,$$ such that
\begin{equation}
\mathcal{T}(x) \, = \, \Vert \nabla^P r(x)\Vert
\geq g(r(x)) \, > \, 0  \quad {\textrm{for all}}
\quad x \in P \,\, .
\end{equation}
\end{enumerate}
Let $C_{w,g,h}^{m}$ denote the $W$-model with the
specific warping function $W: \pi(C_{w,g,h}^{m})
\to \mathbb{R}_{+}$  constructed in
Definition \ref{defCspace}, (Subsection \ref{secIsopCompSpace}), via $w$, $g$, and $h$.
Then the triple $\{ N^{n}, P^{m}, C_{w,g,h}^{m}
\}$ is called an {\em{isoperimetric comparison
constellation bounded from below}} on the interval $[\,0, R]\,$.
\end{definition}

\begin{remark}
This definiton of {\em{isoperimetric comparison
constellation bounded from below}} was introduced in
\cite{MP5}.
\end{remark}

A \lq\lq constellation bounded from above" is given by the following dual setting,
(with respect to the definition above), considering the special $W$-model spaces $C_{w,g,h}^{m}$ with $g=1$:

\begin{definition}\label{defConstellatNew2}
Let $N^{n}$ denote a Riemannian manifold with a
pole $p$ and distance function $r \, = \, r(x) \,
= \, \dist_{N}(p, x)$. Let $P^{m}$ denote an
unbounded complete and closed submanifold in
$N^{n}$. Suppose the following  conditions are
satisfied for all $x \in P^{m}$ with $r(x) \in
[\,0, R]\,$:
\begin{enumerate}[(a)]
\item The $p$-radial sectional curvatures of $N$ are bounded from above
by the $p_{w}$-radial sectional curvatures of
the $w-$model space $M_{w}^{m}$:
$$
\mathcal{K}(\sigma_{x}) \, \leq \,
-\frac{w''(r(x))}{w(r(x))} \quad .
$$

\item The $p$-radial mean curvature of $P$ is bounded from above by
a smooth radial function $h(r)$:
$$
\mathcal{C}(x)  \leq h(r(x)) \quad.
$$
\end{enumerate}

Let $C_{w,1,h}^{m}$ denote the $W$-model with the
specific warping function $W: \pi(C_{w,1,h}^{m})
\to \mathbb{R}_{+}$ constructed, (in the same way as in Definition \ref{defConstellatNew1} above), in
Definition \ref{defCspace} via $w$, $g=1$, and $h$.
Then the triple $\{ N^{n}, P^{m}, C_{w,1,h}^{m}
\}$ is called an {\em{isoperimetric comparison
constellation bounded from above}} on the interval $[\,0, R]\,$.
\end{definition}

\begin{remark} The {\em{isoperimetric comparison
constellations bounded from above}} constitutes a generalization of the triples
$\,\{N^{n},\,P^{m},\,M^{m}_{w} \} \,$ considered in the main theorem of \cite{MP4}. This
generalization is given by the fact that we construct the isoperimetric comparison space $C_{w,g,h}^{m}$
with $g=1$, (by definition), and, when $P$ is minimal, then we consider as the bounding funtion  $h=0$. It
is straigthforward to see that, under these restrictions, $W=w$ and hence, $C_{w,1,0}^{m}=M^m_w$.
\end{remark}

\section{Isoperimetric results}
\label{secIsopRes}

We find upper bounds for the isoperimetric quotient defined as the
volume of the extrinsic sphere divided by the volume of the
extrinsic ball, in the setting given by the comparison
constellations. In order to do that, we need the following
Laplacian comparison Theorem for manifolds with a pole (see
\cite{GreW}, \cite{JK}, \cite{MP3}, \cite{MP4}, \cite{MP5} and
\cite{MM} for more details). Moreover, we shall assume along this
Section that all extrinsic balls are precompact.
\begin{theorem} \label{corLapComp} Let $N^{n}$ be a manifold with a pole $p$, let $M_{w}^{m}$ denote a $w-$model
with center $p_{w}$. Then we have the following dual Laplacian inequalities for modified distance functions:\\

(i) Suppose that every $p$-radial sectional curvature at $x \in N
- \{p\}$ is bounded  by the $p_{w}$-radial sectional curvatures in
$M_{w}^{m}$ as follows:
\begin{equation}
\mathcal{K}(\sigma(x)) \, = \, K_{p, N}(\sigma_{x})
\geq-\frac{w''(r)}{w(r)}\quad .
\end{equation}

Then we have for every smooth function $f(r)$ with $f'(r) \leq
0\,\,\textrm{for all}\,\,\, r$, (respectively $f'(r) \geq
0\,\,\textrm{for all}\,\,\, r$):
\begin{equation} \label{eqLap1}
\begin{aligned}
\Delta^{P}(f \circ r) \, \geq (\leq) \, &\left(\, f''(r) -
f'(r)\eta_{w}(r) \, \right)
 \Vert \nabla^{P} r \Vert^{2} \\ &+ mf'(r) \left(\, \eta_{w}(r) +
\langle \, \nabla^{N}r, \, H_{P}  \, \rangle  \, \right)  \quad ,
\end{aligned}
\end{equation}
where $H_{P}$ denotes the mean curvature vector
of $P$ in $N$.\\

(ii) Suppose that every $p$-radial sectional curvature at $x
\in N - \{p\}$ is bounded  by the $p_{w}$-radial sectional
curvatures in $M_{w}^{m}$ as follows:
\begin{equation}
\mathcal{K}(\sigma(x)) \, = \, K_{p, N}(\sigma_{x})
\leq-\frac{w''(r)}{w(r)}\quad .
\end{equation}

Then we have for every smooth function $f(r)$ with $f'(r) \leq
0\,\,\textrm{for all}\,\,\, r$, (respectively $f'(r) \geq
0\,\,\textrm{for all}\,\,\, r$):
\begin{equation} \label{eqLap2}
\begin{aligned}
\Delta^{P}(f \circ r) \, \leq (\geq) \, &\left(\, f''(r) -
f'(r)\eta_{w}(r) \, \right)
 \Vert \nabla^{P} r \Vert^{2} \\ &+ mf'(r) \left(\, \eta_{w}(r) +
\langle \, \nabla^{N}r, \, H_{P}  \, \rangle  \, \right)  \quad ,
\end{aligned}
\end{equation}
where $H_{P}$ denotes the mean curvature vector of $P$ in $N$.
\end{theorem}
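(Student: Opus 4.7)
The plan is to reduce the submanifold Laplacian inequality to the ambient Hessian comparison theorem for the distance function $r$, and then trace carefully over the tangent space $T_xP$. Throughout I would work at a point $x \in P \setminus \{p\}$ and use an orthonormal frame $\{e_1,\ldots,e_m\}$ of $T_xP$.

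First I would record the standard decomposition formula for the intrinsic Laplacian of a pulled-back ambient function: for any smooth $F:N\to\erre$,
\begin{equation*}
\Delta^{P}F(x) \,=\, \sum_{i=1}^{m}\Hess^{N}\!F(e_i,e_i) \,+\, m\,\langle \nabla^{N}F,\,H_{P}\rangle \quad .
\end{equation*}
Applying this to $F = f\circ r$ and using $\nabla^{N}(f\circ r) = f'(r)\,\nabla^{N}r$ together with the chain rule for the Hessian,
\begin{equation*}
\Hess^{N}(f\circ r)(X,Y) \,=\, f''(r)\,\langle \nabla^{N}r,X\rangle\langle\nabla^{N}r,Y\rangle \,+\, f'(r)\,\Hess^{N}\!r(X,Y) \quad ,
\end{equation*}
and tracing over $\{e_i\}$ (using $\sum_i \langle e_i,\nabla^{N}r\rangle^{2} = \|\nabla^{P}r\|^{2}$), I would obtain
\begin{equation*}
\Delta^{P}(f\circ r) \,=\, f''(r)\,\|\nabla^{P}r\|^{2} \,+\, f'(r)\,\tr_{P}\!\Hess^{N}\!r \,+\, m\,f'(r)\,\langle \nabla^{N}r,H_{P}\rangle \quad .
\end{equation*}

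The next step is to invoke the Hessian comparison theorem for manifolds with a pole (as found in \cite{GreW}, \cite{JK}): under hypothesis (i), $K_{p,N}\ge -w''/w$ implies
\begin{equation*}
\Hess^{N}\!r(X,X) \,\le\, \eta_{w}(r)\,\bigl(\|X\|^{2}-\langle X,\nabla^{N}r\rangle^{2}\bigr) \quad ,
\end{equation*}
with the reverse inequality under hypothesis (ii). Tracing this inequality over $T_xP$ gives
\begin{equation*}
\tr_{P}\!\Hess^{N}\!r \,\le\, \eta_{w}(r)\bigl(m - \|\nabla^{P}r\|^{2}\bigr) \quad ,
\end{equation*}
under (i), and the reverse under (ii). Substituting back into the expression for $\Delta^{P}(f\circ r)$ and grouping $\eta_w(r)$ terms yields
\begin{equation*}
\Delta^{P}(f\circ r) \,=\, \bigl(f''(r)-f'(r)\eta_{w}(r)\bigr)\|\nabla^{P}r\|^{2} \,+\, m\,f'(r)\bigl(\eta_{w}(r)+\langle\nabla^{N}r,H_{P}\rangle\bigr) \,+\, f'(r)\,\mathcal{E} \quad ,
\end{equation*}
where $\mathcal{E}$ is the non-negative (resp.\ non-positive) error between $\tr_{P}\Hess^{N}r$ and $\eta_w(r)(m-\|\nabla^{P}r\|^{2})$.

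The final and most delicate step is the sign bookkeeping: the direction of the inequality on $\Delta^{P}(f\circ r)$ is determined by the product of the sign of $f'(r)$ with the sign of the comparison error $\mathcal{E}$. Under case (i) with $f'\le 0$, the term $f'(r)\mathcal{E}$ is $\ge 0$, producing the lower bound (\ref{eqLap1}); under case (i) with $f'\ge 0$, the same product reverses sign, giving the upper bound. Cases (ii) follow by swapping the direction of the Hessian comparison. The main obstacle in this proof is precisely this careful tracking of four separate sign combinations simultaneously with the fact that $\|\nabla^{P}r\|$ is generally strictly less than $1$, which forces one to be careful not to discard the $\|\nabla^{P}r\|^{2}$ factor on the first term when absorbing the comparison error.
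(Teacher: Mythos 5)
Your proof is correct and follows the standard argument found in the references the paper cites for this theorem (Greene--Wu, Jorge--Koutroufiotis, and the authors' own earlier work); the paper itself does not reprove it. The three ingredients you assemble --- the decomposition $\Delta^{P}F = \tr_{P}\Hess^{N}F + m\langle\nabla^{N}F,H_{P}\rangle$, the Hessian chain rule for $f\circ r$, and the Greene--Wu Hessian comparison --- are exactly right, and the trace identity $\sum_i\langle e_i,\nabla^{N}r\rangle^{2}=\|\nabla^{P}r\|^{2}$ is what produces the $\|\nabla^{P}r\|^{2}$ factor. One small slip in exposition: with your definition $\mathcal{E}=\tr_{P}\Hess^{N}r-\eta_w(r)(m-\|\nabla^{P}r\|^{2})$ (forced by the sign $+f'(r)\mathcal{E}$ in your grouped identity), case (i) gives $\mathcal{E}\le 0$ (non-positive), not non-negative; the parenthetical ``non-negative (resp.\ non-positive)'' has the two cases swapped, but your final sentence about the sign of $f'(r)\mathcal{E}$ is stated correctly, so the conclusion stands.
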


The isoperimetric inequality (\ref{eqIsopGeneralA}) below has been stated and proved previously
in \cite[Theorem 7.1]{MP5}. On the other hand, the isoperimetric inequality (\ref{eqIsopGeneralB}) has been
stated and proved in \cite{MP4}, but only under the assumption that $P$ is minimal and that the model space
satisfies a more restrictive balance condition, see Remark f. For completeness we therefore give a sketch of the
proof of inequality (\ref{eqIsopGeneralB}) below.

\begin{theorem} \label{thmIsopGeneral1} There are two dual settings to be considered:\\
(i) Consider an isoperimetric comparison
constellation bounded from below $\{ N^{n}, P^{m}, C_{w,g,h}^{m}
\}$.
Assume that the isoperimetric comparison space
 $\, C_{w,g,h}^{m}\,$ is $w$-balanced from below.
Then
\begin{equation} \label{eqIsopGeneralA}
\frac{\Vol(\partial D_{R})}{\Vol(D_{R})} \leq
\frac{\Vol(\partial
B^{W}_{s(R)})}{\Vol(B^{W}_{s(R)})} \, \leq \,
\frac{m}{g(R)}\left(\eta_{w}(R) -
h(R)\right)\quad .
\end{equation}
where $s(R)$ is the {\em stretched} radius given by Definition \ref{stretchingfunct}.\\

\noindent (ii) Consider an isoperimetric comparison
constellation bounded from above $\{ N^{n}, P^{m}, C_{w,1,h}^{m}
\}$.
 Assume that the isoperimetric comparison space
 $\, C_{w,1,h}^{m}\,$ is $w$-balanced from below.
Then
\begin{equation} \label{eqIsopGeneralB}
\frac{\Vol(\partial D_{R})}{\Vol(D_{R})} \geq
\frac{\Vol(\partial
B^{W}_{R})}{\Vol(B^{W}_{R})} .
\end{equation}

If equality holds in (\ref{eqIsopGeneralB}) for some fixed radius $R >0$, then $D_R$ is a cone
in the ambient space $N^n$.
\end{theorem}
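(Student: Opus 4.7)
The plan is to follow the transplantation strategy of \cite{MP4, MP5}: I would lift the radial mean exit time $E^W_R$ of the isoperimetric comparison model $M^m_W = C^m_{w,1,h}$ to $D_R$ by setting $F(x) = E^W_R(r(x))$, then apply the Laplacian comparison of Theorem~\ref{corLapComp}(ii) to produce a pointwise upper bound on $\Delta^P F$, and finally integrate over $D_R$ by means of the divergence theorem. Because $(E^W_R)'(r) = -q_W(r) \leq 0$ by Proposition~\ref{propW1}, the $f' \leq 0$ branch of Theorem~\ref{corLapComp}(ii) applies, and the hypothesis $\mathcal{C}(x) \leq h(r(x))$ together with the sign of $(E^W_R)'$ permits replacing $\mathcal{C}(x)$ by $h(r(x))$ on the right-hand side of the comparison formula without destroying the inequality.

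The central step is to use the $w$-balance from below assumption to deduce $\Delta^P F \leq -1$ pointwise on $D_R$. For this I would exploit the defining equation \eqref{eqLambdaDiffeq} for $\Lambda$, which in the setting $g \equiv 1$ rearranges to the clean identity $(m-1)\eta_W(r) = (m-1)\eta_w(r) - m\,h(r)$. Combined with the model-space Poisson equation $(E^W_R)''(r) + (m-1)\eta_W(r)(E^W_R)'(r) = -1$, this identity collapses the right-hand side of the Laplacian comparison into
\begin{equation*}
\Delta^P F(x) \,\leq\, -\|\nabla^P r(x)\|^2 \,-\, m\,q_W(r)\bigl(\eta_w(r) - h(r)\bigr)\bigl(1 - \|\nabla^P r(x)\|^2\bigr),
\end{equation*}
after which the balance inequality $m\,q_W(\eta_w - h) \geq 1$ together with $\|\nabla^P r\| \leq 1$ yields $\Delta^P F \leq -1$. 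I expect this telescoping to be the main technical obstacle, since one must see that the $w$-balance from below condition is precisely what is needed to absorb the remainder cleanly.

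Once $\Delta^P F \leq -1$ is established, I would integrate over $D_R$ and apply the divergence theorem, using that $\partial D_R$ is a level set of $r$ in $P$ with outward unit normal $\nu = \nabla^P r / \|\nabla^P r\|$. This produces
\begin{equation*}
-\Vol(D_R) \,\geq\, \int_{D_R} \Delta^P F\, d\sigma \,=\, -q_W(R) \int_{\partial D_R} \|\nabla^P r\|\, d\mu,
\end{equation*}
and combining with $\|\nabla^P r\| \leq 1$ on $\partial D_R$ converts this into $\Vol(D_R) \leq q_W(R) \Vol(\partial D_R)$, which is \eqref{eqIsopGeneralB}. For the rigidity statement I would trace equality backwards: equality in \eqref{eqIsopGeneralB} forces both $\|\nabla^P r\| = 1$ on $\partial D_R$ (from the last estimate becoming sharp) and $\Delta^P F = -1$ on $D_R$ (from the integration step becoming sharp), and the equality case of the pointwise bound then propagates $\|\nabla^P r\| \equiv 1$ throughout $D_R$. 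Consequently $\nabla^N r$ is tangent to $P$ at every point of $D_R$, so for each $x \in D_R$ the unique minimising geodesic in $N$ from $p$ to $x$—which is the integral curve of $\nabla^N r$ ending at $x$—stays entirely in $P \cap B_R(p) = D_R$. Hence $D_R$ is foliated by radial geodesics from $p$ and is a cone in $N^n$ with apex $p$.
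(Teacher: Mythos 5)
Your proposal treats only part~(ii) of the theorem, and for that part it is correct and follows essentially the same route as the paper: transplant the radial solution of a model-space Poisson problem to $D_R$, apply Theorem~\ref{corLapComp}(ii), and integrate by the divergence theorem. The algebra you carry out is a slightly more explicit version of what the paper packages into the second-order operator $\LL$ of~(\ref{eq3.7}) and the solution $\psi$ of~(\ref{eqPoisson1}): your identity $(m-1)\eta_W(r) = (m-1)\eta_w(r) - m\,h(r)$, which drops out of~(\ref{eqLambdaDiffeq}) when $g\equiv 1$, is exactly what makes $\psi(r) = E^W_R(r)$ (Proposition~\ref{relphiE}), and your pointwise estimate on $\Delta^P F$ is the same computation the paper summarizes in~(\ref{eqYellow}) and~(\ref{lemaJK2}). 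Your handling of the equality case is the same chain of forced equalities the paper uses, so that part is fine.

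The gap is that part~(i) is not addressed at all. There the comparison constellation is bounded from below, the Laplacian inequality reverses to Theorem~\ref{corLapComp}(i), and --- importantly --- the tangency function $g$ is no longer $\equiv 1$. Your shortcut identity $(m-1)\eta_W = (m-1)\eta_w - m\,h$ is specific to $g=1$; for general $g$ the relation between the $W$-model's radial logarithmic derivative and $\eta_w$, $h$, $g$ is more involved, and the model Poisson equation in $s$-coordinates no longer coincides with the transplanted $r$-coordinate ODE without the change of variables $s = s(r)$ built into Definition~\ref{stretchingfunct} and the operator $\LL$. So either run the symmetric argument explicitly with the stretching function (as the paper does, via $\psi'(r) = -q_W(s(r))/g(r)$ from~(\ref{derivative})), or at minimum state that part~(i) follows by the dual Laplacian inequality~(\ref{eqLap1}) and was proved in \cite{MP5}, which is what the paper itself does.
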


\begin{proof} The proof starts from the same point for both inequalities.
As in \cite{MP5}, we define a second order differential operator
$\LL$ on functions $f$ of one real variable as follows:
\begin{equation} \label{eq3.7}
\LL f(r) \, = \, f''(r)\,g^{2}(r) +
f'(r)\left((m-g^{2}(r))\,\eta_{w}(r) - m\,h(r)
\right) \quad ,
\end{equation}
and consider the smooth solution $\psi(r)$ to the
following Dirichlet--Poisson problem:
\begin{equation}   \label{eqPoisson1}
\begin{aligned}
\LL \psi(r) &= -1\,\,\,\,\text{on}\,\,\,\, [\,0, R]\quad ,\\
\psi(R) &=0\quad .
\end{aligned}
\end{equation}
The ODE is equivalent to the following:
\begin{equation}\label{eqPoisson2}
\psi''(r) +
\psi'(r)\left(-\eta_{w}(r) + \frac{m}{g^{2}(r)}\left(\eta_{w}(r) - \,h(r)\right)\right)\, = \, -\frac{1}{g^{2}(r)}\quad .
\end{equation}
The solution is constructed via the auxiliary function
$\Lambda(r)$ from equation (\ref{eqLambdaDiffeq}) and it is given, as it can be
seen in \cite{MP5}, by:

\begin{equation}\label{derivative}
\begin{aligned}
\psi'(r) \, &= \, \Gamma (r)\, = \,
\frac{-1}{g(r)\,\Lambda(r)}\,\int_{0}^{r}\,\frac{\Lambda(t)}{g(t)}\,dt
\, \\ &= - \frac{\Vol(B^{W}_{s(r)})}{g(r)\Vol(\partial
B^{W}_{s(r)})}\, = - \frac{q_{W}(s(r))}{g(r)}\quad ,
\end{aligned}
\end{equation}
and then
\begin{equation} \label{eqMeanExitMonster}
\begin{aligned}
\psi(r) \,&= \,
\int_{r}^{R}\frac{1}{g(u)\,\Lambda(u)}\,\left(\int_{0}^{u}\,\frac{\Lambda(t)}{g(t)}\,dt\,\right)\,du
\\ &= \int_{r}^{R} \frac{q_{W}(s(u))}{g(u)} \, du
\,= \int_{s(r)}^{s(R)} \,q_{W}(t) \, dt \quad .
\end{aligned}
\end{equation}

We must recall, as it was pointed out in Remark \ref {remarkone}, that, when we consider a comparison
constellation bounded from above, as in the statement (ii) of the Theorem, then $g(r) =1$ in
(\ref{eq3.7}) and (\ref{eqPoisson2}), so $s(r)=r$, and
\begin{equation*}
\psi'(r)=-q_W(r)=-\frac{\Vol(B^W_r)}{\Vol(S^W_r)} .
\end{equation*}

Then - because of the balance condition (\ref{eqBalA}) and equation (\ref{eqPoisson2})
- the function $\psi(r)$ enjoys the following inequality:
\begin{equation} \label{eqYellow}
\psi''(r) - \psi'(r)\,\eta_{w}(r) \, \geq \, 0 \
\quad .
\end{equation}

The second common step to prove isoperimetric inequalities (\ref{eqIsopGeneralA}) and (\ref{eqIsopGeneralB}),
is to transplant $\psi(r)$ to
$D_R$ defining
$$\psi: D_R \longrightarrow \erre; \quad \psi(x):=\psi(r(x))\, .$$

Now, we are going to focus attention on the isoperimetric inequality (\ref{eqIsopGeneralB}).
In this case, we have that the sectional
curvatures of the ambient manifold are bounded from above, inequality (\ref{eqYellow}), that the p-radial mean
curvature of $P$ is bounded from above by $h(r)$, and that $\eta_w(r)-h(r) >0 \quad {\textrm{for all \,}} r>0$. Then,
applying now the Laplace inequality (\ref{eqLap2}) in Theorem \ref{corLapComp}
for the transplanted function $\psi(r)$  we have the following
comparison,
\begin{equation}\label{lemaJK2}
\begin{aligned}
\Delta^{P}\psi(r(x)) \, &\leq \,
\left(\psi''(r(x))- \psi'(r(x))\eta_{w}(r(x))\right)\Vert\nabla^P r\Vert^2 \\
& \phantom{mm}+ m\psi'(r(x))\left( \eta_{w}(r(x)) -
h(r(x))\right)\, \\ &\leq \, \LL \psi(r(x)) \, = \, -1 \, = \,
\Delta^{P}E(x) \quad .
\end{aligned}
\end{equation}

Applying the divergence theorem, using the unit normal $\nabla^P r/\,\Vert \nabla^P r\Vert$ to $\partial D_r$,
we get, as in \cite{Pa1}, but now for submanifolds with $p$-radial mean curvature bounded from above by $h(r)$:

 \begin{equation}\label{eqVolFrac2}
    \begin{aligned} \Vol(D_{R})&  \leq
    \int_{D_{R}} -\Delta^{P} \psi(r(x))\, d\sigma \\ &=
    -\Gamma(R)\int_{\partial D_{R}}\Vert\nabla^P r\Vert\, d\sigma \\&  \leq
    -\Gamma(R)\Vol(\partial D_{R}) \quad .
    \end{aligned}
    \end{equation}
    which shows the isoperimetric inequality (\ref{eqIsopGeneralB}), because in this case, and in view of
    remark \ref{remarkone}, we have that
    $$\Gamma(r)=\psi'(r)=-q_W(r)=-\frac{\Vol(B^W_r)}{\Vol(S^W_r)} .$$

    To prove the equality assertion, we note that equality in (\ref{eqIsopGeneralB}) for some fixed $R>0$ implies
    that the inequalities in (\ref{lemaJK2}) and (\ref{eqVolFrac2}) become equalities. Hence,
    $\Vert\nabla^P r\Vert=1=\Vert\nabla^N r\Vert$ in $D_R$, so $\nabla^P r =\nabla^N r$ in $D_R$. Then, all the
    geodesics in $N$ starting at $p$ thus lie in $P$, so $D_R=\exp_p(\widetilde D_R)$, with $\widetilde D_R$ being
    the $0$-centered $R$-ball in $T_pP$. Therefore, $D_R$ is a cone in $N$.\\

 Inequality (\ref{eqIsopGeneralA}) is proved in the same way, see \cite{MP5}, but using the Laplace
 inequality (\ref{eqLap1}) to the transplanted function $\psi(r)$. In this case, we are assuming that the sectional
 curvatures of the ambient manifold are bounded from below and the $p$-radial mean curvature of the
 submanifold is bounded from below  by the function $h(r)$. Under these conditions, we have
   \begin{equation}\label{lemaJK3}
\Delta^{P}\psi(r(x)) \, \geq \, \LL \psi(r(x)) \, = \, -1 \, = \, \Delta^{P}E(x) \quad .
\end{equation}

Then, we obtain the result applying the divergence theorem as
before and taking into account that in this case the derivative of
$\psi(r)$ is
$$\Gamma(R)=\psi'(R)=- \frac{\Vol(B^{W}_{s(R)})}{g(R)\Vol(\partial
B^{W}_{s(R)})} .$$

\end{proof}

A corollary of the proof of Theorem \ref{thmIsopGeneral1} is the
following

\begin{proposition} \label{relphiE}Let us consider the isoperimetric model space $M^m_W=C^m_{w,g,h}$. Then
$$\psi(r)=E^W_{s(R)}(s(r))\,\,{\textrm{for all \,}} r \in [0,R]\quad ,$$
where $s$ is the stretching function defined in equation
(\ref{eqstretching}) and
$$E^W_{s(R)}:B^W_{s(R)} \longrightarrow \erre\quad ,$$
is the solution of the Poisson problem

\begin{equation}   \label{eqPoisson3}
\begin{aligned}
\Delta^{C_{w,g,h}^{m}} E(s) &= -1\,\,\,\,\text{on}\,\,\,\, B^{W}_{s(R)}\quad ,\\
E &=0\,\,\,\,\text{on}\,\,\,\, \partial B^{W}_{s(R)}\quad.
\end{aligned}
\end{equation}
\end{proposition}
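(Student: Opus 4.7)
My plan is to observe that this proposition is essentially a bookkeeping identification of two integrals that have already been written down in the excerpt, requiring no new analytic work beyond recognizing that $C^m_{w,g,h}$ is itself a $W$-model space (which was already noted explicitly after Definition \ref{defCspace}).

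First I would recall that in the proof of Theorem \ref{thmIsopGeneral1} the auxiliary function $\psi(r)$ was solved explicitly in equation (\ref{eqMeanExitMonster}), yielding
$$\psi(r) \, = \, \int_{s(r)}^{s(R)} q_{W}(t) \, dt.$$
So the left-hand side of the claimed equality is already in a form involving the isoperimetric quotient function $q_W$ of the model space $M^m_W$.

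Next I would apply Proposition \ref{propW1} to the $W$-model $M^m_W = C^m_{w,g,h}$, with the geodesic ball $B^W_{s(R)}$ playing the role of the generic model ball $B^w_R$ and with $s(R)$ replacing $R$. Proposition \ref{propW1} then produces the explicit formula
$$E^W_{s(R)}(\rho) \, = \, \int_{\rho}^{s(R)} q_{W}(t) \, dt$$
for the unique solution of the Poisson problem (\ref{eqPoisson3}) on $B^W_{s(R)}$. Setting $\rho = s(r)$ for $r \in [0, R]$ gives precisely the integral representation of $\psi(r)$ obtained in the previous step, proving $\psi(r) = E^W_{s(R)}(s(r))$.

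There is essentially no obstacle here: the only thing one has to verify is the legitimacy of invoking Proposition \ref{propW1}, i.e. that $C^m_{w,g,h}$ is a bona fide model space in its own right with pole at $s=0$. But this has already been observed immediately after Definition \ref{defCspace} (using $W(0)=0$, $W'(0)=1$), so the invocation is justified, and the proof reduces to comparing the two displayed integrals.
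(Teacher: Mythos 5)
Your argument is exactly the paper's proof: the paper simply states that Proposition \ref{relphiE} ``follows directly from Proposition \ref{propW1} by applying (\ref{eqMeanExitMonster})'', and you have unpacked precisely that one-line justification by writing out both integral formulas and noting that $C^m_{w,g,h}$ qualifies as a $W$-model. Correct and same approach.
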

\begin{proof}

This follows directly from Proposition \ref{propW1} by applying (\ref{eqMeanExitMonster}).

\end{proof}

The proof of the next Corollary \ref{corVolBound}, (where we assume that the submanifold $P$ has bounded $p$-radial
mean curvature from above or from below), follows the same formal steps as the corresponding results for minimal
submanifolds, which can be founded in \cite{MP5}, \cite{Pa2}, and in \cite{MP4}. As in these proofs, the co-area
formula, see \cite{Cha1}, plays here a fundamental r\^{o}le.

\begin{corollary} \label{corVolBound} Again we consider the two dual settings:\\
(i) Let $\{ N^{n}, P^{m}, C_{w,g,h}^{m} \}$ be a
comparison constellation bounded from below on the interval $[\,0,
R]\,$, as in statement (i) of Theorem \ref{thmIsopGeneral1}.

Then
\begin{equation}\label{CompVolUp}
\Vol(D_{r}) \, \leq \, \Vol(B_{s(r)}^{W})
\,\,\,\,\,\,\textrm{for every} \,\,\, r \in [\,0,
R] \,\, .
\end{equation}

(ii) Let $\{ N^{n}, P^{m}, C_{w,1,h}^{m} \}$ be a
comparison constellation bounded from above on the interval $[\,0,
R]\,$, as in statement (ii) of Theorem \ref{thmIsopGeneral1}.

Then
\begin{equation}\label{CompVolDown}
\Vol(D_{r}) \, \geq \, \Vol(B_{r}^{W})
\,\,\,\,\,\,\textrm{for every} \,\,\, r \in [\,0,
R] \,\, .
\end{equation}

Equality in (\ref{CompVolDown}), for  all $r\in[0,R]$ and some
fixed radius $R>0$ implies that $D_R$ is a cone in $N^n$, using
the same arguments as in the proof of Theorem
\ref{thmIsopGeneral1}.

\end{corollary}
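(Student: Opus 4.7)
The plan is to convert the isoperimetric inequalities of Theorem~\ref{thmIsopGeneral1} into first-order differential inequalities for the volume growth function $v(r):=\Vol(D_r)$, and then to integrate them after matching the initial asymptotics at $r=0$. The essential analytic tool is the co-area formula applied to the extrinsic distance,
\begin{equation*}
v'(r)\,=\,\int_{\partial D_r}\frac{1}{\Vert\nabla^P r\Vert}\,d\sigma\quad.
\end{equation*}
Since $\Vert\nabla^P r\Vert\leq 1$ always, this yields the lower bound $v'(r)\geq\Vol(\partial D_r)$; under the radial tangency hypothesis $\Vert\nabla^P r\Vert\geq g(r)$ from (i) it also gives $v'(r)\leq\Vol(\partial D_r)/g(r)$.

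For part (ii), the lower co-area estimate combined with Theorem~\ref{thmIsopGeneral1}(ii) yields
\begin{equation*}
\frac{v'(r)}{v(r)}\,\geq\,\frac{\Vol(\partial D_r)}{\Vol(D_r)}\,\geq\,\frac{\Vol(\partial B^W_r)}{\Vol(B^W_r)}\,=\,\frac{d}{dr}\log\Vol(B^W_r)\quad.
\end{equation*}
I would then integrate from a small $\varepsilon>0$ to $r$ and send $\varepsilon\to 0^{+}$, using that $v(\rho)$ and $\Vol(B^W_\rho)$ are both asymptotic to $V_0\rho^m/m$ near the origin (since $W'(0)=1$ and an extrinsic ball of small radius in $P^m$ is close to the Euclidean ball in $T_pP$). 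Exponentiating gives $v(r)\geq\Vol(B^W_r)$, which is (\ref{CompVolDown}).

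For part (i), the chain rule together with $s'(r)=1/g(r)$ rewrites the model quotient as
\begin{equation*}
\frac{1}{g(r)}\,\frac{\Vol(\partial B^W_{s(r)})}{\Vol(B^W_{s(r)})}\,=\,\frac{d}{dr}\log\Vol(B^W_{s(r)})\quad.
\end{equation*}
Combining the upper co-area estimate with Theorem~\ref{thmIsopGeneral1}(i) gives
\begin{equation*}
\frac{v'(r)}{v(r)}\,\leq\,\frac{1}{g(r)}\,\frac{\Vol(\partial D_r)}{\Vol(D_r)}\,\leq\,\frac{d}{dr}\log\Vol(B^W_{s(r)})\quad,
\end{equation*}
and because $g(0)=1$ forces $s(r)=r+O(r^2)$, the two sides again share the Euclidean initial asymptotic $V_0 r^m/m$, so integration yields (\ref{CompVolUp}).

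Finally, for the equality statement in (\ref{CompVolDown}): if $\Vol(D_r)=\Vol(B^W_r)$ on all of $[0,R]$, then the chain of inequalities above must hold as equalities for every $r$, which in particular forces equality in Theorem~\ref{thmIsopGeneral1}(ii); by the equality clause already established there, $D_R$ is a cone in $N^n$. The main obstacle I anticipate is not the ODE manipulation itself but the careful justification of the initial-data matching at $r=0$: one must verify that $\log(v(\rho)/\Vol(B^W_{\star}(\rho)))\to 0$, and not merely that the two growth rates agree. This uses $g(0)=1$, $W'(0)=1$, and the standard Euclidean expansion of small extrinsic balls around a point of $P^m$.
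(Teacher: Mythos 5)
Your proof is correct and follows essentially the route the paper indicates: the paper's own sketch only points to \cite{Pa2}, \cite{MP4}, \cite{MP5} and flags the co-area formula as the key tool, and your argument is precisely that — co-area gives $v'(r)=\int_{\partial D_r}\Vert\nabla^P r\Vert^{-1}d\sigma$, the tangency bounds $g(r)\leq\Vert\nabla^P r\Vert\leq 1$ squeeze $v'(r)$ between $\Vol(\partial D_r)$ and $\Vol(\partial D_r)/g(r)$, Theorem~\ref{thmIsopGeneral1} converts this into $\tfrac{d}{dr}\log\Vol(D_r)\lessgtr\tfrac{d}{dr}\log\Vol(B^W_{s(r)})$ (with $s(r)=r$ in case (ii)), and the monotone ratio is pinned down by the shared Euclidean leading term $V_0 r^m/m$ at $r=0$. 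Your explicit attention to the initial-data matching (via $g(0)=1$, $W'(0)=1$, and the small-ball expansion in $P$) is exactly the point the cited proofs treat implicitly, and your deduction that equality in (\ref{CompVolDown}) on all of $[0,R]$ forces equality in (\ref{eqIsopGeneralB}) and hence the cone conclusion is also handled correctly.
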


\section{Symmetrization into model spaces}\label{SecSymm}

As in \cite{MP4} we use  the concept of Schwarz--symmetrization as considered in e.g.  \cite{Ba}, \cite{Po}, or,
more recently, in \cite{Mc} and \cite{Cha2}. We review some facts
about this instrumental tool.
\begin{definition}\label{Symm}
Suppose $\,D\,$ is a precompact open connected domain in
$\,P^m\,$. Then the $w-$model space symmetrization of $\,D\,$ is
denoted by $D^{*}$ and is defined to be the unique
$p_{w}-$centered ball $\,D^{*}\, = \, B^{w}(D)\,$ in $\,M^m_w\,$
satisfying $\,\Vol(D)\,=\,\Vol(B^{w}(D))\,$. In the particular
case where $D$ is actually an extrinsic metric ball $D_{R}$ in
$\,P\,$ of radius $R$ we may write
$$
D_{R}^{*} \, = \,B^{w}(D) \,= \, B^{w}_{T(R)} \quad ,
$$
where $\,T(R)\,$ is some increasing function of $R$ which depends
on the geometry of $\,P\,$, according to the defining property:
$$
\,\Vol(D_{R})\,=\,\Vol(B^{w}_{T(R)})\, \quad .
$$
\end{definition}

We also introduce the notion of a symmetrized function on the
symmetrization $\,D^{*}\,$ of $\,D\,$ as follows.

\begin{definition}\label{defsimfunc}
Let $f$ denote a nonnegative function on $\,D\,$
$$
f: D \subseteq P \rightarrow \mathbb{R}^+ \cup
    \{0\}\quad .
$$
   For $t >0$ we let
$$
\,D(t)\,=\, \{x \in D \,|\, f(x) \geq t\}\, \quad .
$$
    Then the symmetrization of $f$ is
    the function $f^{\,*}: D^{*} \rightarrow \mathbb{R} \cup
    \{0\}\,$
    de\-fi\-ned by
$$
f^{\,*}(x^{*})= \sup\{t\,|\, x^{*} \in D(t)^{*}\,\}\quad .
$$
\end{definition}

\begin{proposition}
The symmetrized objects $f^{\,*}$ and $D^{*}$ satisfy the
following properties:
\begin{enumerate}
    \item The function $f^{\,*}$ depends only on the geodesic distance
    to the center $\,p_{w}\,$ of the ball $\,D^{*}\,$ in $\,M_{w}^{m}\,$ and is non-increasing.

    \item The functions $f$ and $f^{\,*}$ are equimeasurable in the
    sense that
\begin{equation}\label{eqVol}
\Vol_P(\{x \in D \,|\, f(x) \geq t\})=
    \Vol_{M^m_{w}}(\{x^{*} \in D^{*} \,|\,  f^{\,*}(x^{*}) \geq t\})
\end{equation}
for all $\,t \geq 0\,$.
    In particular, for all $t >0$, we have
\begin{equation}\label{eqMass}
     \int _{D(t)} f \,d\sigma \, \leq \, \int_{D(t)^{*}} f^{\,*} \,d\tilde\sigma \quad .
 \end{equation}
\end{enumerate}
\end{proposition}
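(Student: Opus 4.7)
The plan is to derive all three properties from the nesting structure of the super-level sets $D(t)=\{f\geq t\}$ together with the defining property $\Vol(D(t)^*)=\Vol(D(t))$ of the Schwarz symmetrization, plus the fact that $D(t)^*$ is a $p_w$-centred metric ball in $M^m_w$ depending only on the volume of $D(t)$. The arguments are elementary, but a little care is required in item (2) because the sup in the definition of $f^*$ may or may not be attained.

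For item (1), I first observe that $s\leq t$ implies $D(t)\subseteq D(s)$ in $P$, hence $\Vol(D(t))\leq \Vol(D(s))$, and therefore $D(t)^*\subseteq D(s)^*$ as concentric $p_w$-centred balls. Thus the set $T(x^*):=\{t\geq 0:x^*\in D(t)^*\}$ is downward closed and depends only on $\dist_{M^m_w}(x^*,p_w)$. Since $f^*(x^*)=\sup T(x^*)$, the value $f^*(x^*)$ is radial, and points closer to $p_w$ have a larger set $T(x^*)$, giving the non-increasing monotonicity in the geodesic distance.

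For item (2) I would establish the set equality $\{x^*\in D^*:f^*(x^*)\geq t\}=D(t)^*$. The inclusion $D(t)^*\subseteq\{f^*\geq t\}$ is immediate from the definition of the supremum: if $x^*\in D(t)^*$ then $t\in T(x^*)$, so $f^*(x^*)\geq t$. For the reverse inclusion, I would use left-continuity of $t\mapsto D(t)^*$: continuity of measure from above, applied to the decreasing intersection $D(t)=\bigcap_{s<t}D(s)$ (each $D(s)$ of finite volume because $D$ is precompact), gives left-continuity of $t\mapsto \Vol(D(t))$, hence of the radius $\rho(t)$ of $D(t)^*$, and therefore $\bigcap_{s<t}D(s)^*=D(t)^*$. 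Thus if $f^*(x^*)\geq t$, then $x^*\in D(s)^*$ for every $s<t$, so $x^*\in D(t)^*$. The equimeasurability (\ref{eqVol}) then follows because $\Vol(D(t)^*)=\Vol(D(t))$ by the very definition of the symmetrization.

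Finally, (\ref{eqMass}) will follow from Cavalieri's (layer-cake) formula applied to both sides, giving
$$\int_{D(t)}f\,d\sigma=\int_0^\infty\Vol\bigl(D(t)\cap D(s)\bigr)\,ds,\quad \int_{D(t)^*}f^*\,d\tilde\sigma=\int_0^\infty\Vol\bigl(D(t)^*\cap D(s)^*\bigr)\,ds,$$
where on the right I use the identification $\{f^*\geq s\}=D(s)^*$ established in item (2). The elementary set identities $D(t)\cap D(s)=D(\max(s,t))$ upstairs, and $D(t)^*\cap D(s)^*=D(\max(s,t))^*$ for concentric balls downstairs, reduce both integrals to $t\,\Vol(D(t))+\int_t^\infty\Vol(D(s))\,ds$, so (\ref{eqMass}) in fact holds as equality. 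The main obstacle, such as it is, is the left-continuity step in item (2); once that is in hand, the rest is bookkeeping with nested balls.
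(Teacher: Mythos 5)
Your proof is correct and uses exactly the slicing/Cavalieri method that the paper refers to when it cites Chavel's classical treatment of Schwarz symmetrization; the paper itself does not give a detailed argument, so your write-up is a faithful expansion of the same approach. Two small points worth recording: (a) the asserted set identity $\{f^*\geq t\}=D(t)^*$ may fail on the boundary sphere of the ball when $t\mapsto\Vol(D(t))$ has a flat spot (one side can be a closed ball and the other an open one), but the two sets always agree up to measure zero, so equimeasurability and the subsequent layer-cake computation are unaffected; and (b) your observation that the final inequality in fact holds with \emph{equality} is correct and slightly sharpens the paper's statement, which only asserts $\leq$.
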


\begin{remark}
 The proof of these properties follows the proof of the
classical Schwarz symmetrization using the 'slicing' technique for
symmetrized volume integrations and comparison -- see e.g.
\cite{Cha2}.
\end{remark}
\medskip
In the proof of both Theorem \ref{thm2.1} and Theorem \ref{thm2.2} in Section \ref{MainSect},  we shall consider
a symmetric model space rearrangement of the extrinsic ball $D_R$ as it has been described in Definition \ref{Symm}
and Definition \ref{defsimfunc} \, , namely, a symmetrization of $D_R$ which is a  geodesic $T(R)$-ball in the model
space $M^m_W$ such that $\text{vol}(D_R)=\text{vol}(B^W_{T(R)})$, together the symmetrization of the transplanted
radial function $\psi: D_R \longrightarrow \erre$ of the solution  of the Poisson problem (\ref{eqPoisson1}) in $[0,R]$.
We know (see Proposition \ref{relphiE}) that $\psi(r)=E^W_{s(R)}(s(r))$, where $E^W_{s(R)}$ is the solution of
the Poisson problem (\ref{eqPoisson3}).\\

This symmetrization is a function $\psi^*: B^W_{T(R)}\longrightarrow \erre$  which satisfies the property that
inequality (\ref{eqMass}) becomes an equality. This property becomes a crucial fact in the proof of
Theorems \ref{thm2.1} and \ref{thm2.2}.

\begin{theorem}\label{eqschwarz}
Let $\psi^*: B^W_{T(R)}\longrightarrow \erre$ be the symmetrization of the transplanted radial
function $\psi: D_R \longrightarrow \erre$ of the solution  of the Poisson problem (\ref{eqPoisson1}) in $[0,R]$. Then
    \begin{equation}
    \int_{D_R} \psi d\sigma=\int_{B^W_{T(R)}}\psi^*
    d\tilde\sigma\quad .
    \end{equation}
    \end{theorem}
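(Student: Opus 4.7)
My plan is to exploit the fact that the transplanted function $\psi$ is strictly radial and monotonically decreasing, which is exactly the degenerate case in which the Schwarz rearrangement produces equality, rather than the generic inequality (\ref{eqMass}). The tool will be the layer cake (Cavalieri) formula applied in both $D_R$ and in $B^W_{T(R)}$.

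First, I would record the two structural observations about $\psi$. From the explicit formula
\begin{equation*}
\psi'(r) \,=\, -\frac{q_W(s(r))}{g(r)} \,<\, 0 \quad\text{for } r\in(0,R],
\end{equation*}
obtained in (\ref{derivative}), together with $\psi(R)=0$, one sees that $\psi$ (viewed as a function on $D_R$ via $\psi(x)=\psi(r(x))$) is a strictly decreasing function of the extrinsic distance $r$, with supremum $\psi(0)$ and infimum $0$. In particular, for each $t\in[0,\psi(0))$ the superlevel set
\begin{equation*}
D(t)=\{x\in D_R : \psi(x)\ge t\}=D_{\rho(t)}, \qquad \rho(t):=\psi^{-1}(t)\le R,
\end{equation*}
is again an extrinsic ball.

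Next I would apply Cavalieri's principle to both sides. On $P$ we get
\begin{equation*}
\int_{D_R}\psi\,d\sigma=\int_0^{\psi(0)}\Vol_P(D(t))\,dt.
\end{equation*}
On $M^m_W$, the symmetrized function $\psi^\ast$ is by construction radial and non-increasing (property (1) of the proposition following Definition \ref{defsimfunc}), so its own superlevel sets $\{x^\ast\in B^W_{T(R)}:\psi^\ast(x^\ast)\ge t\}$ are centered balls. The critical point is to identify these balls explicitly with $D(t)^\ast$. By Definition \ref{defsimfunc}, $\psi^\ast(x^\ast)=\sup\{t:x^\ast\in D(t)^\ast\}$. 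Because the family $\{D(t)\}_{t\ge 0}$ is nested (larger $t$ gives a smaller extrinsic ball), so is the family $\{D(t)^\ast\}$ of centered balls in $M^m_W$; monotonicity of the map $t\mapsto\Vol(D(t))$ together with left-continuity of $t\mapsto D(t)$ then gives
\begin{equation*}
\{x^\ast\in B^W_{T(R)}:\psi^\ast(x^\ast)\ge t\}=D(t)^\ast
\end{equation*}
for every $t\in[0,\psi(0)]$.

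Once this identification is in hand, the equimeasurability property (\ref{eqVol}) yields $\Vol_{M^m_W}(D(t)^\ast)=\Vol_P(D(t))$ for every $t$, and a second application of Cavalieri gives
\begin{equation*}
\int_{B^W_{T(R)}}\psi^\ast\,d\widetilde\sigma
=\int_0^{\psi(0)}\Vol_{M^m_W}(D(t)^\ast)\,dt
=\int_0^{\psi(0)}\Vol_P(D(t))\,dt
=\int_{D_R}\psi\,d\sigma,
\end{equation*}
which is the asserted equality. The only delicate step — and where I expect most of the writing to go — is the identification of $\{\psi^\ast\ge t\}$ with $D(t)^\ast$; the strict monotonicity of $\psi$ in $r$ is precisely what makes (\ref{eqMass}) collapse from an inequality to an equality here.
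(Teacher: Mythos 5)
Your argument is correct, and it reaches the same conclusion by a cleaner route than the paper. Both proofs are, at bottom, slicing arguments: they reduce $\int\psi^*$ to an integral of $\Vol$ of super-level sets and invoke the defining equimeasurability $\Vol(D(t)^*)=\Vol(D(t))$ to pass back to $P$. The paper, however, takes a longer path: it writes $\int_{B^W_{T(R)}}\psi^* d\tilde\sigma$ in polar coordinates, performs the change of variables $\tilde r=\tilde r(t)$ to arrive at $-\int_0^T t\,V'(t)\,dt$, then applies the co-area formula twice --- once to express $V'(t)=-\int_{\Gamma(t)}\Vert\nabla^P\psi\Vert^{-1}d\sigma_t$, and once more to reassemble $-\int_0^T t\,V'(t)\,dt$ into $\int_{D_R}\psi\,d\sigma$. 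Your layer-cake (Cavalieri) formulation avoids the co-area machinery and the gradient weights entirely, integrating by parts in the variable $t$ implicitly rather than slicing by level sets of $\psi$ with Jacobian corrections. This buys you a more elementary proof, and it also exposes a point the paper's phrasing obscures: the equality $\int_D f\,d\sigma=\int_{D^*}f^*\,d\tilde\sigma$ over the \emph{full} domains is a general consequence of equimeasurability together with Cavalieri for any non-negative $f$, and is not specifically produced by the radial strict monotonicity of $\psi$. So your closing remark that strict monotonicity is ``what makes (\ref{eqMass}) collapse to an equality'' is slightly off: for $t=0$ that collapse is automatic. Where the radial strict monotonicity of $\psi$ genuinely matters is in making the identification $\{\psi^*\ge t\}=D(t)^*$ immediate and in rendering $\psi^*$ an explicit, differentiable, radial function --- properties the paper needs not here but in Proposition \ref{lemineqphiE}. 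Your handling of that identification is correct; the precise input you need is continuity of $t\mapsto\Vol(D(t))$, which follows from the continuity and strict monotonicity of $\psi$ together with the continuity of the volume of extrinsic balls in the radius, exactly as you indicate.
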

    \begin{proof}

First of all, we are going to define $\psi^*$. To do that, let us consider  $T= \max_{[0,R]} \psi$.
    On the other hand, and given $t \in [0,T]$, let us define
    the sets
    $$
    D(t)=\{x \in D_R \,|\, \psi(r(x)) \geq t\}\quad ,
    $$
    and
    $$
    \Gamma(t)=\{x \in D_R \,|\, \psi(r(x))= t\} \quad .
    $$
    As $\psi(r(x))=E^W_{s(R)}(s(r(x)))\,\,\,{\textrm{for all \,}} x \in D_R$,  then $\psi$ is
    radial and non-increasing, its maximum $T$ will be
    attained at $r=0$, $D(t)$ is the extrinsic ball in $P$ with radius
    $a(t):=\psi^{-1}(t)$, (we denote it as $D_{a(t)}$),
    and $\Gamma(t)$ is its boundary, the extrinsic sphere with
    radius $a(t)$, $\partial D_{a(t)}$. We have too that
    $D(0)=D_R$ and $D(T)=\{p\}$, the center of the
    extrinsic ball $D_R$.

    We consider the symmetrizations of the sets $D(t) \subseteq P$,
    namely, the geodesic balls
    $D(t)^*=B^W_{\tilde r(t)}$ in $M^m_W$
    such that
     $$\Vol(D(t))=\Vol(D_{a(t)})=\Vol(B^W_{\tilde r(t)})\quad .$$

Hence, we have defined a non-increasing function
    $$\tilde r: [0,T] \longrightarrow [0,T(R)]; \,\,\,\tilde r=\tilde r(t)\quad ,$$
    defined as the radius $\tilde r(t)$ from the center $\tilde p$ of the model space $C_{w,g,h}^{m}$ such
    that $\Vol(B^W_{\tilde r(t)})=\Vol(D(t))=\Vol(D_{a(t)})$, (and hence, $\tilde r(0)=T(R)$ and $\tilde r(T)=0$),
    with inverse $$\phi:[0, T(R)] \longrightarrow [0,T];\,\,\, \phi= \phi(\tilde r)\quad ,$$
    such that $\phi'(\tilde r(t))=\frac{1}{\tilde r'(t)}\,\,\,{\textrm{for all \,}} t\in [0,T]$.

    Thus, given $\tilde x \in B^W_{T(R)}$, and taking into account
    that $$B^W_{T(R)} =\cup_{t \in [0,T]} \partial D(t)^*=\cup_{t \in [0,T]}S^W_{\tilde r(t)}\,\, ,$$ there exists
    some biggest value $t_0$
    such that $r_{\tilde p}(\tilde x)=\tilde r(t_0)$, (and hence, $\tilde x \in D(t_0)^*$).
Therefore, in accordance with Definition \ref{defsimfunc},
    the symmetrization of $\psi: D_R \longrightarrow \erre$  is a function $\psi^*: B^W_{T(R)}\longrightarrow \erre$
    defined as
    \begin{equation}\label{eqPsi}
    \psi^*(\tilde x)=E^{W*}_{s(R)}(s(r_{\tilde p}(\tilde x))= t_0=\phi(\tilde r(t_0)) .
    \end{equation}

\begin{remark} We pause to make two observations:\\
(i) Note that $\psi^*$ is a radial function, $\psi^*(\tilde x)=\psi^*(\tilde r(\tilde x))=\psi^*(\tilde r)$.
Therefore, for all $\tilde r \in [0,T(R)]$ and $t \in [0,T]$, we have
    \begin{equation}\label{eq1} \psi^{*'}(\tilde r)=\phi'(\tilde r(t))=\frac{1}{\tilde r'(t)}\,\, .\end{equation}\\
(ii) Let  $T(R)$ be the radius such that $\Vol(B^W_{T(R)})=\Vol(D_R)$, and let $s(R)$ be the \lq\lq stretched"
radius $s(R)=\int_0^R \frac{1}{g(t)} dt$.

    As the comparison constellation is bounded from below, and by virtue of  inequality (\ref{CompVolUp}) in
    Corollary \ref{corVolBound}, we have, for all $t \in [0,T]$, $\Vol(B^W_{\tilde r(t)}) =\Vol(D_{a(t)}) \leq \Vol(B^W_{s(a(t))})$,
    so $\tilde r(t) \leq s(a(t)) \,\,{\textrm{for all \,}} t \in [0,T]$ and then
    \begin{equation}\label{size1} T(R)=b(0)\leq s(a(0))=s(R)\quad .
    \end{equation}
    \end{remark}
    \vspace{0.4cm}

     By definition of $\psi^*$, we have  $\psi^* =\phi\circ \tilde r$ on $B^W_{T(R)}$, see (\ref{eqPsi}). Then,
     using the formula for integration in a disc in a model space (\cite[p. 47]{Cha1}) we get
    \begin{equation}
    \begin{aligned}
    \int_{B^W_{T(R)}}\psi^* d\tilde\sigma&=\int_{B^W_{T(R)}}\phi\circ \tilde r d\tilde\sigma
    \\&=\int_{S^{0,m-1}_1}d A(\xi)\{\int_0^{T(R)}\phi(\tilde r) W^{m-1}(\tilde r) d\tilde r\}\\
    &=\int_0^{T(R)}\phi(\tilde r) \Vol(S^{0,m-1}_1) W^{m-1}(\tilde r) d\tilde r\\&= \int_0^{T(R)}\phi(\tilde r) \Vol(S^{W}_{\tilde r})
    d\tilde r \quad .
    \end{aligned}
    \end{equation}
     Now, we change the variable using the bijective, (monotone decreasing), function
     $\tilde r:[0,T] \longrightarrow [0,T(R)];\,\, \tilde r(0)=T(R),\,\, \tilde r(T)=0$, so
     \begin{equation}
     \int_0^{T(R)}\phi(\tilde r) \Vol(S^{W}_{\tilde r}) d\tilde r=\int_T^0\phi(\tilde r(t)) \Vol(S^{W}_{\tilde r(t)})
     \tilde r'(t)dt \quad .
     \end{equation}

     But we know that $\phi(\tilde r(t))=t \,\,\,{\textrm{for all \,}} t \in [0,T]$, and, on the other hand,
     denoting as $V(t)=\Vol(B^{W}_{\tilde r(t)})=\Vol(D(t))\,\,\,{\textrm{for all \,}} t\in [0,T]$, we have
     \begin{equation}\label{eq2}
     V'(t)=\Vol(S^{W}_{\tilde r(t)})\tilde r'(t) \,\,{\textrm{for all \,}} t\in [0,T]\quad ,
     \end{equation}

     and hence
     \begin{equation}
     \int_T^0\phi(\tilde r(t)) \Vol(S^{W}_{\tilde r(t)}) \tilde r'(t)dt=-\int_0^T t
     V'(t)dt\quad .
     \end{equation}

     Now, we apply co-area formula to the following setting: we have the transplanted function
     $\psi:D_R \longrightarrow \erre$, and the sets $D(t)$, with their boundaries $\Gamma(t)$. We have, by definition,
     that $V(t)=\Vol(D(t))$, so
     \begin{equation}\label{eq3}
     V'(t)=-\int_{\Gamma(t)}\Vert \nabla^P \psi\Vert^{-1}
     d\sigma_t\quad .
     \end{equation}

     Hence, putting together all the equalities before, taking into account that
     $\psi\vert_{\Gamma(t)}=t\,\,\,{\textrm{for all \,}} t \in [0,T]$, and using the co-area formula
     again (\cite[equation (4) in Theorem 1, p. 86]{Cha1}), we conclude
     \begin{equation}
     \begin{aligned}
     \int_{B^W_{T(R)}}&\psi^* d\tilde\sigma=-\int_0^T t V'(t)dt
     =\int_{D_R}\psi
     d\sigma \quad .
     \end{aligned}
     \end{equation}
    \end{proof}

\section{Main results}\label{MainSect}

By definition, the torsional rigidity $\mathcal{A}_1(D_R)$ is the
$D_R$-integral of the mean exit time function $E_{R}(x)$ from $x$
in $D_{R}$. We note that for most minimally immersed submanifolds
$\,P^{m}\,$ in the flat Euclidean spaces $\,\mathbb{R}^{n}\,$ with
the obvious choice of comparison model space, $\,M^{m}_{W}\,=
\,\mathbb{R}^{m}\,$, $\,W(r)\, = \, r\,$, we have (see \cite{Ma1},
\cite{Pa2}):
$$
E_{R}(x) \, = \, E_{R}^{W}(r(x)) \quad \textrm{for all} \quad x
\in D_{R} \quad,
$$
$$
\textrm{but also} \quad \Vol(D_{R}) \,>\, \Vol(B_{R}^{W}) \quad,
$$
$$
\textrm{so that} \quad \tors(D_{R}) \, > \, \tors(B_{R}^{W}) \quad
.
$$

In this sense Theorem \ref{thm2.1} is a generalization of this
fact, when we assume that the ambient space has sectional
curvatures bounded from below, and that the mean curvature of the
submanifold is controlled along the radial directions from the
pole. These assumptions includes minimality and convexity of the
submanifold. This result is based on previous geometrical and
analytical considerations from \cite{MP5}.

\begin{theorem} \label{thm2.1}
Let $\{ N^{n}, P^{m}, C_{w,g,h}^{m} \}$ denote a comparison
constellation boun\-ded from below in the sense of Definition \ref{defConstellatNew1}. Assume
that $M^m_W=C_{w,g,h}^{m}$ is
$w$-balanced from below, and $W-$balanced from above. Let $D_R$ be a precompact extrinsic $R$-ball in $P^m$,
 with center at a point $p \in P$ which also serves as a pole in $N$.  Then
\begin{equation} \label{eqMain}
\mathcal{A}_1(D_R) \geq \mathcal{A}_1\left(B^{W}_{T(R)}\right) \quad ,
\end{equation}
where $B^{W}_{T(R)}$ is the Schwarz symmetrization of $D_R$ in the
$W$-model space $C_{w,g,h}^{m}$, i.e., it is the geodesic ball in
$C_{w,g,h}^{m}$ such that $\Vol(D_R)= \Vol(B^{W}_{T(R)})$.
\end{theorem}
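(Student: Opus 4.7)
My proof proposal is to establish the chain of inequalities
\begin{equation*}
\mathcal{A}_1(D_R) \;\geq\; \int_{D_R} \psi\, d\sigma \;=\; \int_{B^W_{T(R)}} \psi^{*}\, d\tilde\sigma \;\geq\; \mathcal{A}_1(B^W_{T(R)}),
\end{equation*}
where $\psi: D_R \to \erre$ is the transplanted radial function $\psi(x) = E^W_{s(R)}(s(r(x))) = \int_{s(r(x))}^{s(R)} q_W(u)\,du$ already used in the proof of Theorem \ref{thmIsopGeneral1}(i), and $\psi^{*}$ is its Schwarz symmetrization on the model ball $B^W_{T(R)} \subset C^m_{w,g,h}$ in the sense of Definitions \ref{Symm} and \ref{defsimfunc}.

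The first inequality and the middle equality are already within reach. Since $\psi$ solves $\LL\psi = -1$ with $\psi(R) = 0$, applying the Laplace comparison of Theorem \ref{corLapComp}(i) under the lower bounds on radial sectional and mean curvature together with the tangency condition gives, exactly as in (\ref{lemaJK3}), the inequality $\Delta^P \psi(r(x)) \geq -1 = \Delta^P E(x)$ on $D_R$. The function $\psi - E$ is then subharmonic in $H^1_0(D_R)$, so the maximum principle yields $E \geq \psi$ pointwise on $D_R$; integrating over $D_R$ produces the first inequality, and the middle equality is Theorem \ref{eqschwarz}.

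It remains to show $\int_{B^W_{T(R)}} \psi^{*}\, d\tilde\sigma \geq \mathcal{A}_1(B^W_{T(R)})$. I plan to derive this from the pointwise bound $\psi^{*}(\tilde r) \geq E^W_{T(R)}(\tilde r)$ on $[0, T(R)]$, which in turn follows from the derivative inequality $(\psi^{*})'(\tilde r) \leq -q_W(\tilde r) = (E^W_{T(R)})'(\tilde r)$ together with the common boundary value $0$ at $\tilde r = T(R)$. Parametrizing $\tilde r = \tilde r(t)$ by $\Vol(B^W_{\tilde r(t)}) = \Vol(D_{a(t)})$ with $a(t) = \psi^{-1}(t)$, and using $\psi'(r) = -q_W(s(r))/g(r)$ together with the co-area identity $\frac{d}{dr}\Vol(D_r) = \int_{\partial D_r} d\sigma/\Vert\nabla^P r\Vert$, I obtain
\begin{equation*}
(\psi^{*})'(\tilde r(t)) \;=\; -\,\frac{q_W(s(a(t)))\,\Vol(S^W_{\tilde r(t)})}{g(a(t))\displaystyle\int_{\partial D_{a(t)}} d\sigma/\Vert\nabla^P r\Vert}.
\end{equation*}

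The hard step is then to arrange our hypotheses in the precise order required to simplify this expression. The tangency bound $\Vert\nabla^P r\Vert \geq g(r)$ yields $g(a(t))\int_{\partial D_{a(t)}} d\sigma/\Vert\nabla^P r\Vert \leq \Vol(\partial D_{a(t)})$, and the isoperimetric inequality (\ref{eqIsopGeneralA}) applied at radius $a(t)$ further bounds this by $\Vol(D_{a(t)})/q_W(s(a(t)))$. Using $\Vol(D_{a(t)}) = \Vol(B^W_{\tilde r(t)}) = q_W(\tilde r(t))\,\Vol(S^W_{\tilde r(t)})$, the required derivative inequality collapses after cancellation to the scalar comparison $q_W(s(a(t))) \geq q_W(\tilde r(t))$. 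This is now a direct consequence of the two balance ingredients of the theorem: Corollary \ref{corVolBound}(i) furnishes the radius comparison $\tilde r(t) \leq s(a(t))$, and the $W$-balance-from-above of $C^m_{w,g,h}$ forces $q_W$ to be non-decreasing. The principal obstacle is this last step, where the derivative of $\psi^{*}$ must be chased through the volume-preserving symmetrization in such a way that the tangency, isoperimetric, and balance conditions align exactly to reduce the problem to the monotonicity of $q_W$ between two comparable radii.
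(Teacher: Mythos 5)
Your proposal is correct and reproduces the paper's proof essentially verbatim: the same three-part chain $\mathcal{A}_1(D_R)\geq\int_{D_R}\psi=\int_{B^W_{T(R)}}\psi^*\geq\mathcal{A}_1(B^W_{T(R)})$ with $E_R\geq\psi$ from the Laplace comparison and the maximum principle, the middle equality from Theorem \ref{eqschwarz}, and the final step via the derivative bound $\psi^{*'}\leq E^{W'}_{T(R)}$ proved with exactly the paper's ingredients (tangency, the isoperimetric inequality \eqref{eqIsopGeneralA}, the radius comparison $\tilde r(t)\leq s(a(t))$ from Corollary \ref{corVolBound}(i), and monotonicity of $q_W$ from $W$-balance from above). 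The only difference is cosmetic bookkeeping in Proposition \ref{lemineqphiE}: you apply the monotonicity of $q_W$ once at the end to a squared quotient, while the paper threads it through twice at intermediate stages, but these are the same computation.
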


\begin{proof}[Proof of Theorem \ref{thm2.1}]

 Given the solution $E_R$ to the  Dirichlet-Poisson
equation on $D_R$, we compare it with the transplanted function
$\psi(r(x))$, defined on $D_R$ as the radial solution of equation (\ref{eqPoisson1}) in the proof of Theorem
\ref{thmIsopGeneral1}. In fact, by inequality (\ref{lemaJK3}) we
have that $\psi-E_R$ is a subharmonic function with
$E_R(R)=\psi(R)=0$, so, applying Maximun Principle, $$E_R \geq
\psi \,\,\text{on}\,\, D_R\quad . $$

Using this inequality and Proposition \ref{eqschwarz}, we have
\begin{equation}\label{eq4}
\mathcal{A}_1(D_R) =\int_{D_R} E_R d\sigma \geq \int_{D_R} \psi
d\sigma \quad =\int_{B^W_{T(R)}}\psi^* d\tilde\sigma .
\end{equation}
\bigskip
The symmetrized function $\psi^*: B^W_{T(R)} \longrightarrow \erre$ is a radial function, but it does not
necessarily satisfy the Poisson equation on $B^W_{T(R)}$. Then, we are going to compare $\psi^*$ with the radial
solution $E^W_{T(R)} : B^W_{T(R)} \longrightarrow \erre$ of the Dirichlet-Poisson problem

    \begin{equation}
\begin{aligned}
\Delta^{C^m_{w,g,h}}E&= -1\,\,\,\,\text{on}\,\,\,\, B^W_{T(R)}\\
E &=0\,\,\,\,\text{on}\,\,\,\, \partial B^W_{T(R)} .
\end{aligned}
\end{equation}

 To do that, we shall prove the following (the proof is given below after finishing the proof of Theorem \ref{thm2.1})
 \begin{proposition}\label{lemineqphiE}
 \begin{equation}\label{ineqphiE}
 \psi^{*'}(\tilde r) \leq E^{W'}_{T(R)}(\tilde r) \,\,\,{\textrm{for all \,}} \tilde r \in [0,T(R)] .
 \end{equation}
 \end{proposition}
\vspace{0.4cm}
 Assuming (\ref{ineqphiE}) for a moment, integrating from $\tilde r$ to $T(R)$ both sides of inequality
 (\ref{ineqphiE}), and taking into account that
 $$\psi^*(T(R))=\phi(T(R))=0=E^W_{T(R)}(T(R))\quad ,$$
 we obtain, for all $\tilde r \in [0,T(R)]$,
 \begin{equation}
 -\psi^*(\tilde r)=\int_{\tilde r}^{T(R)}\psi^{*'}(l)dl \leq \int_{\tilde r}^{T(R)}E^{W'}_{T(R)}(l)  dl
 =-E^W_{T(R)}(\tilde r)\quad ,
\end{equation}
 and hence,
$$\psi^*(\tilde r) \geq E^W_{T(R)}(\tilde r) \,\,\,\,{\textrm{for all \,}} \tilde r \in [0,T(R)]\quad .$$
Therefore,

\begin{equation}
   \begin{aligned}
   \mathcal{A}_1(D_R) &=\int_{D_R} E_R d\sigma \geq \int_{D_R} \psi d\sigma
   =\int_{B^W_{T(R)}}\psi^* d\tilde\sigma\\ &\geq \int_{B^W_{T(R)}}E^W_{T(R)}d\tilde\sigma
   = \mathcal{A}_1(B^W_{T(R)})\quad ,
   \end{aligned}
   \end{equation}
   and the Theorem is proved.
\end{proof}

\begin{proof}[Proof of Proposition \ref{lemineqphiE}]
Using equations (\ref{eq1}), (\ref{eq2}) and (\ref{eq3}), we have
that
\begin{equation}
\begin{aligned}
\psi^{*'}(\tilde r)&=\frac{1}{\tilde r'(t)}=-\frac{\Vol(S^W_{\tilde r(t)})}{\int_{\Gamma(t)}\Vert
\nabla^P \psi\Vert^{-1} d\sigma_t}\quad .
\end{aligned}
\end{equation}

As $\psi(r)$ is radial, we have
\begin{equation}
\Vert\nabla^P \psi(r)\Vert=\vert \psi'(r)\vert\Vert \nabla^P
r\Vert\geq \vert \psi'(r)\vert g(r)\quad ,
\end{equation}
 so, as $\Gamma(t)=\partial D_{a(t)}$ for all $t \in [0,T]$, we
 have that
 \begin{equation}
 \begin{aligned}
  \int_{\Gamma(t)}\Vert \nabla^P \psi\Vert^{-1} d\sigma_t&=\frac{1}{\vert \psi'(a(t))\vert}\int_{\partial D_{a(t)}}
  \Vert \nabla^P r\Vert^{-1}\\ &\leq \frac{1}{\vert \psi'(a(t))\vert g(a(t))}{\Vol(\partial
  D_{a(t)})}\quad ,
  \end{aligned}
  \end{equation}
  and hence, by equation (\ref{derivative})
  \begin{equation}
   \begin{aligned}
 \psi^{*'}(\tilde r(t)) &\leq -\vert \psi'(a(t))\vert g(a(t))\frac{\Vol(S^W_{\tilde r(t)})}{\Vol(\partial
 D_{a(t)})}\\
 &=-\frac{\Vol(B^W_{s(a(t))})}{\Vol(S^W_{s(a(t))})}\frac{\Vol(S^W_{\tilde r(t)})}{\Vol(\partial
 D_{a(t)})}\quad .
 \end{aligned}
 \end{equation}

But we have that (see Remark j in the proof of Theorem \ref{eqschwarz} and inequality (\ref{CompVolUp}) in
Corollary \ref{corVolBound})
$$\tilde r(t) \leq s(a(t))\,\,{\textrm{for all \,}} t \quad ,$$
so, since $q_W'(r) \geq 0\,$, we get:
\begin{equation}
\frac{\Vol(B^W_{\tilde r(t)})}{\Vol(S^W_{\tilde r(t)})}\leq
\frac{\Vol(B^W_{s(a(t))})}{\Vol(S^W_{s(a(t))})}\quad .
\end{equation}
Therefore, as $\Vol(B^W_{\tilde r(t)})=\Vol(D_{a(t)})$,
\begin{equation}
\psi^{*'}(\tilde r(t)) \leq-\frac{\Vol(D_{a(t)})}{\Vol(\partial
D_{a(t)})}\quad .
\end{equation}

Now, we apply again the isoperimetric inequality of Theorem
\ref{thmIsopGeneral1} (i), the fact that $\tilde r(t) \leq s(a(t))$ and
that  $q_W'(r) \geq 0$ to obtain finally
\begin{equation}
\begin{aligned}
\psi^{*'}(\tilde r(t))& \leq-\frac{\Vol(D_{a(t)})}{\Vol(\partial
D_{a(t)})}\leq -\frac{\Vol(B^W_{s(a(t))})}{\Vol(S^W_{s(a(t))})}
\\ &\leq -\frac{\Vol(B^W_{\tilde r(t)})}{\Vol(S^W_{\tilde r(t)})}
\, =E^{W'}_{T(R)}(\tilde r(t))\quad .
\end{aligned}
\end{equation}

\end{proof}
Theorem \ref{thm2.2} below is a generalization of the result \cite[Theorem 2.1]{MP4}. In that paper, we
obtained an upper bound for
the torsional rigidity of the extrinsic domains of a {\em{minimal}}
submanifold.  We assume now that the radial mean curvature of the submanifold is bounded from above,
and, as in \cite{MP4}, that the ambient manifold has sectional curvatures bounded from above.
Hence, we have the following generalization to
submanifolds which are not necessarily minimal:

\begin{theorem} \label{thm2.2}
Let $\{ N^{n}, P^{m}, C_{w,1,h}^{m} \}$  denote a comparison
constellation boun\-ded from above. Assume that
$M^m_W=C_{w,1,h}^{m}$ is $w$-balanced from below, $W-$ba\-lan\-ced
from above, and that it has infinite volume. Let $D_R$ be a
precompact extrinsic $R$-ball in $P^m$,
 with center at a point $p \in P$ which also serves as a pole in $N$.  Then
\begin{equation} \label{eqMain3}
\mathcal{A}_1(D_R) \leq \mathcal{A}_1(B^{W}_{T(R)}) \quad ,
\end{equation}
where $B^{W}_{T(R)}$ is the Schwarz symmetrization of $D_R$ in the
$W$-model space $M^m_W$, i.e., it is the geodesic ball in $M^m_W$
such that $\Vol(D_R)= \Vol(B^{W}_{T(R)})$.
Equality in
(\ref{eqMain3}) for some fixed radius $R$ implies that $D_R$ is a
cone in $N$.
\end{theorem}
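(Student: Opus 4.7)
The plan is to mirror the lower-bound proof of Theorem \ref{thm2.1}, systematically reversing every inequality that depended on the direction of the curvature and mean-curvature bounds. The infinite-volume hypothesis on $M^m_W$ ensures that $T(R)$, defined by $\Vol(B^W_{T(R)})=\Vol(D_R)$, exists for every $R$; note that by Corollary \ref{corVolBound}(ii) we now have $\Vol(D_R)\geq\Vol(B^W_R)$, so $T(R)\geq R$.

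First, I would again transplant to $D_R$ the radial solution $\psi$ of the Poisson problem (\ref{eqPoisson1}) (with $g=1$, so $s(r)=r$ and $\psi'(r)=-q_W(r)$). The key change occurs in the Laplacian comparison: in the upper-bound constellation, inequality (\ref{lemaJK2}) reverses (use Theorem \ref{corLapComp}(ii) in place of (i)) to give $\Delta^P\psi(r(x))\leq\mathcal{L}\psi(r(x))=-1=\Delta^P E_R(x)$ on $D_R$, so $\psi-E_R$ is superharmonic with zero boundary values and the minimum principle yields $\psi\geq E_R$ on $D_R$. Integrating and invoking Theorem \ref{eqschwarz} (whose proof only uses the radiality of $\psi$ and the equidistribution definition of $\psi^*$, both still valid here) gives
\begin{equation*}
\mathcal{A}_1(D_R)=\int_{D_R}E_R\,d\sigma\leq\int_{D_R}\psi\,d\sigma=\int_{B^W_{T(R)}}\psi^*\,d\tilde\sigma .
\end{equation*}

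The heart of the proof, and where all the reversals of inequalities must line up, is the dual of Proposition \ref{lemineqphiE}: I would show
\begin{equation*}
\psi^{*\prime}(\tilde r)\geq E^{W\prime}_{T(R)}(\tilde r)\quad\text{for all }\tilde r\in[0,T(R)].
\end{equation*}
Using $\psi^{*\prime}(\tilde r(t))=1/\tilde r'(t)$ together with $V'(t)=-\int_{\Gamma(t)}\|\nabla^P\psi\|^{-1}d\sigma_t$ and the trivial bound $\|\nabla^P r\|\leq 1$ (this replaces the role played by $g(r)$ in the lower-bound setting), the coarea computation now yields
\begin{equation*}
\psi^{*\prime}(\tilde r(t))\geq -\,q_W(a(t))\,\frac{\Vol(S^W_{\tilde r(t)})}{\Vol(\partial D_{a(t)})}.
\end{equation*}
Since $\Vol(D_{a(t)})\geq\Vol(B^W_{a(t)})$ forces $\tilde r(t)\geq a(t)$, and since $W$-balance from above gives $q_W'\geq 0$, I can replace $q_W(a(t))$ by $q_W(\tilde r(t))$ on the right, obtaining $\psi^{*\prime}(\tilde r(t))\geq -\Vol(D_{a(t)})/\Vol(\partial D_{a(t)})$. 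A second application of the monotonicity of $q_W$ combined with the isoperimetric inequality (\ref{eqIsopGeneralB}) bounds this from below by $-\Vol(B^W_{\tilde r(t)})/\Vol(S^W_{\tilde r(t)})=E^{W\prime}_{T(R)}(\tilde r(t))$, as desired.

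Integrating the differential inequality from $\tilde r$ to $T(R)$, and using $\psi^*(T(R))=0=E^W_{T(R)}(T(R))$, yields $\psi^*(\tilde r)\leq E^W_{T(R)}(\tilde r)$ on $B^W_{T(R)}$, whence $\int_{B^W_{T(R)}}\psi^*\,d\tilde\sigma\leq\mathcal{A}_1(B^W_{T(R)})$ and (\ref{eqMain3}) follows. For the equality case, equality in (\ref{eqMain3}) forces equality in every step above; in particular, equality must hold in the isoperimetric inequality (\ref{eqIsopGeneralB}) at the relevant radii, and by the equality statement already proved in Theorem \ref{thmIsopGeneral1}(ii) this forces $\nabla^P r=\nabla^N r$ on $D_R$, so $D_R$ is a cone in $N$. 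The delicate step, and the main obstacle, is the double use of monotonicity of $q_W$ together with $\tilde r(t)\geq a(t)$ to pass from $q_W(a(t))$ to $q_W(\tilde r(t))$ in exactly the right places so that the reversed isoperimetric inequality (\ref{eqIsopGeneralB}) can be applied; this is where the two balance conditions ($w$-balance from below to control $\psi$, $W$-balance from above to guarantee $q_W'\geq 0$) must both be invoked.
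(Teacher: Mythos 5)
Your proposal is correct and follows essentially the same route as the paper's own proof, which simply refers back to the proof of Theorem~\ref{thm2.1} (and to Proposition~\ref{lemineqphiE}) with ``all inequalities inverted.'' You have spelled out the reversals explicitly: using Theorem~\ref{corLapComp}(ii) in place of (i) to obtain $E_R \leq \psi$, invoking the equimeasurability identity of Theorem~\ref{eqschwarz}, and then establishing the dual derivative comparison $\psi^{*\prime}(\tilde r)\geq E^{W\prime}_{T(R)}(\tilde r)$. In that key step, the chain you describe is the correct mirror of the one in Proposition~\ref{lemineqphiE}: starting from the coarea identity and the trivial bound $\|\nabla^P r\|\leq 1$ (which replaces the tangency function $g$ of the lower-bound case), one gets $\psi^{*\prime}(\tilde r(t))\geq -q_W(a(t))\Vol(S^W_{\tilde r(t)})/\Vol(\partial D_{a(t)})$; the reversed volume comparison of Corollary~\ref{corVolBound}(ii) gives $\tilde r(t)\geq a(t)$, so $W$-balance from above ($q_W'\geq 0$) lets you pass from $q_W(a(t))$ to $q_W(\tilde r(t))$, the definition of $\tilde r(t)$ collapses the expression to $-\Vol(D_{a(t)})/\Vol(\partial D_{a(t)})$, and the isoperimetric inequality (\ref{eqIsopGeneralB}) plus one more use of monotonicity finishes. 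Your handling of the equality case (equality in (\ref{eqIsopGeneralB}) forces $\nabla^P r = \nabla^N r$, hence a cone) actually makes explicit a point the paper's proof of Theorem~\ref{thm2.2} leaves unaddressed, relying implicitly on the equality discussion in Theorem~\ref{thmIsopGeneral1}(ii).
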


\begin{proof}[Proof of Theorem \ref{thm2.2}]

The proof of this Theorem follows the lines of the Theorem \ref{thm2.1}, and the same scheme as the proof of
Theorem 2.1 in \cite{MP4}. In this proof, however, the sign of some crucial inequalities is reversed with respect the
proof of Theorem \ref{thm2.1}. In fact, the new
geometric setting given by the {\em comparison constellation
bounded from above} give us inequality (\ref{lemaJK2})
 so when we compare the solution of the problem
(\ref{eqPoisson1}) with the solution $E_R$ to the
Dirichlet-Poisson equation on $D_R$, we conclude, applying too the
maximum principle, that $E_R \leq \psi$ on $D_R$, and hence, using too Proposition \ref{eqschwarz},
\begin{equation}\label{firstineq}
   \mathcal{A}_1(D_R) =\int_{D_R} E_R d\sigma \leq \int_{D_R} \psi
   d\sigma \quad \\ =\int_{B^W_{T(R)}}\psi^*
   d\tilde\sigma \quad ,
    \end{equation}
where $B^W_{T(R)}$ is the symmetrization of $D_R$ in $C^m_{w,h}$.

We must remark that as the comparison constellation is bounded
from above, we have, by virtue of Corollary \ref{corVolBound},
that

$$\Vol(B^W_{\tilde r(t)})=\Vol(D_{a(t)})\geq \Vol(B^W_{a(t)})\quad ,$$
so $\tilde r(t) \geq a(t) \,\,{\textrm{for all \,}} t$ and
\begin{equation}\label{size2}
\tilde r(0)=T(R) \geq a(0)=R \,\, .
\end{equation}

Now, following the lines of the proof of Theorem \ref{thm2.1}
 we have the following, which will be proved below:
 \begin{proposition}\label{lemineq}
 \begin{equation}\label{ineqderivatives2}
 \psi^{*'}(\tilde r) \geq E^{W'}_{T(R)}(\tilde r) \,\,\,{\textrm{for all \,}} \tilde r \in [0,T(R)] .
 \end{equation}
 \end{proposition}
\vspace{0.4cm}
Since
 $$\psi^*(T(R))=\phi(T(R))=0=E^W_{T(R)}(T(R))\quad ,$$
 we obtain, integrating (\ref{ineqderivatives2}) from $\tilde r$ to $T(R)$, that
$$E^{W*}_{R}(\tilde r) \leq E^W_{T(R)}(\tilde r) \,\,\,\,{\textrm{for all \,}} \tilde r \in [0,T(R)]\quad .$$
Therefore,

\begin{equation}
   \begin{aligned}
   \mathcal{A}_1(D_R) &=\int_{D_R} E_R d\sigma \leq \int_{D_R} \psi d\sigma =\int_{B^W_{T(R)}}\psi^* d\tilde\sigma\\
   & \leq \int_{B^W_{T(R)}}E^W_{T(R)}d\tilde\sigma=
   \mathcal{A}_1(B^W_{T(R)})\quad ,
   \end{aligned}
   \end{equation}
   and the Theorem is proved.
\end{proof}
\begin{proof}[Proof of Proposition \ref{lemineq}]
This proof follows the same steps as the proof of Proposition \ref{lemineqphiE}, taking into account that in
this case the comparison constellation is bounded from above, and hence, we shall use the isoperimetric inequality
(\ref{eqIsopGeneralB}) in Theorem \ref{thmIsopGeneral1}, and inequality (\ref{CompVolDown}) in Corollary
\ref{corVolBound}, inverting all inequalities.
\end{proof}

\begin{remark} The
volume of the $W$-model may be finite and we need to guarantee
that there is enough room for the symmetrization construction, because of inequality (\ref{size2}). For
this reason, we assume that the volume of the model space is
infinite. Alternatively we could assume that $W'(r)>0$, because
then the volume $\Vol(B^W_r)$ increases to $\infty$ with $r$. This
condition, however, is more restrictive.
In the setting of Theorem \ref{thm2.1}, $\Vol(D_r)=\Vol
(B^W_{T(r)})\leq \Vol(B^W_{s(r)})$ for all $r$, so we have inequality (\ref{size1}), and  the existence
of $T(R)$ is guaranteed without any additional hypothesis on the
volume of the model space.
\end{remark}
\bigskip
\section{Intrinsic Versions}

In this section we consider the intrinsic versions of Theorems
\ref{thm2.1} and \ref{thm2.2} assuming that $P^m=N^n$. In this case, the extrinsic
distance to the pole $p$ becomes the intrinsic distance in $N$,
so, for all $r$ the extrinsic domains $D_r$ become the geodesic
balls $B^N_r$ of the ambient manifold $N$. Then, for all $x \in P$
\begin{eqnarray*}
\nabla^P r(x)&=&\nabla^N r (x),\\
H_P(x)&=&0.
\end{eqnarray*}
As a consequence, $\|\nabla^P r\|=1$, so $g(r(x))=1$ and
$\mathcal{C}(x)=h(r(x))=0$, the stretching function becomes the
identity $s(r)=r$, $W(s(r))=w(r)$, and the isoperimetric
comparison space $C_{w,g,h}^m$ is reduced to the auxiliary model
space $M^m_w$.\\

For this intrinsic viewpoint, we have the following
isoperimetric and volume comparison inequalities.

\begin{proposition}[\cite{MP5}] Let $N^n$ denote a complete
Riemannian manifold with a pole $p$. Suppose that the $p$-radial
sectional curvatures of $N^n$ are bounded from below by the
$p_w$-radial sectional curvatures of a $w$-model space $M^n_w$.
Then, for all $R>0$
\begin{displaymath}
\frac{\Vol(\partial B^N_R)}{\Vol( B^N_R)} \leq \frac{\Vol(\partial
B^w_R)}{\Vol( B^w_R)}.
\end{displaymath}

Furthermore,

\begin{equation}\label{compvolintri}
\Vol(B^N_R)\leq \Vol (B^w_R).
\end{equation}
\end{proposition}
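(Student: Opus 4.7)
The plan is to view the statement as the intrinsic specialization $P^m=N^n$ of the extrinsic comparison framework already developed.  When the submanifold is the ambient manifold one has $\Vert\nabla^N r\Vert\equiv 1$ and $H_N\equiv 0$, so the natural tangency controller is $g\equiv 1$ and the radial mean convexity function is $h\equiv 0$.  The stretching function then collapses to $s(r)=r$, the isoperimetric comparison space $C^n_{w,1,0}$ reduces to the standard $w$-model $M^n_w$, and extrinsic balls $D_R$ coincide with intrinsic geodesic balls $B^N_R$.  Since $p$ is a pole of $N$, the distance function $r$ is smooth on $N\setminus\{p\}$ and there are no cut-locus obstructions inside $B^N_R$.

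The central technical input is Theorem \ref{corLapComp}(i) applied with $P=N$ and $f(r)=r$.  Because $f'\equiv 1\geq 0$, $f''\equiv 0$, $\Vert\nabla^N r\Vert\equiv 1$, and $H_N\equiv 0$, the inequality collapses to the classical Laplacian comparison
\[
\Delta^N r \,\leq\, (n-1)\,\eta_w(r).
\]
From this, one proceeds exactly as in the proof of Theorem \ref{thmIsopGeneral1}.  Transplant the radial model Poisson solution $\psi(r)=E^w_R(r)=\int_r^R q_w(t)\,dt$ of Proposition \ref{propW1} to $B^N_R$.  Since $g\equiv 1$ and $h\equiv 0$, the operator $\LL$ of (\ref{eq3.7}) is precisely the ODE governing $\psi$, and the Laplacian bound above yields $\Delta^N\psi\geq -1$ on $B^N_R$.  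The divergence theorem with outward unit normal $\nabla^N r$ then gives
\[
\Vol(B^N_R) \,\leq\, -\psi'(R)\,\Vol(\partial B^N_R) \,=\, q_w(R)\,\Vol(\partial B^N_R),
\]
which is equivalent to the asserted isoperimetric inequality.

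For the volume comparison (\ref{compvolintri}), the cleanest route is the standard Bishop--Gromov argument: setting $a(R)=\Vol(\partial B^N_R)$ and $a_w(R)=\Vol(\partial B^w_R)$, the first variation formula together with the Laplacian bound gives $a'(R)/a(R)\leq (n-1)\eta_w(R)=a_w'(R)/a_w(R)$, so the ratio $a(R)/a_w(R)$ is non-increasing with limit $1$ as $R\to 0$; a second integration in $R$ yields $\Vol(B^N_R)\leq\Vol(B^w_R)$.  The only potential obstacle is bookkeeping around smoothness, which the pole hypothesis settles immediately; notably no balance condition on $w$ is needed in this intrinsic specialization because the combination $\psi''-\psi'\eta_w$ that required such a hypothesis in the extrinsic arguments now vanishes identically by construction of $\psi$.
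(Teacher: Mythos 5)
Your plan — specialize the extrinsic comparison machinery to $P^m = N^n$, so that $g\equiv 1$, $h\equiv 0$, $s(r)=r$, $C^n_{w,1,0}=M^n_w$, and extrinsic balls become geodesic balls — is exactly the route the paper takes (compare the proof of Theorem \ref{thm2.1intrinsic}, and Theorem \ref{thmIsopGeneral1}(i) together with Corollary \ref{corVolBound}(i) in the case $g=1$, $h=0$). The Bishop--Gromov argument you give for the volume bound is a perfectly good alternative to the paper's co-area route. However, two points need repair.

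First, your displayed divergence-theorem inequality is backwards. From $\Delta^N\psi\geq -1=\Delta^N E_R$ on $B^N_R$, integrating and using $\nabla^N\psi=\psi'(r)\nabla^N r$ with outward normal $\nabla^N r$ gives
\begin{equation*}
\psi'(R)\,\Vol(\partial B^N_R)\;=\;\int_{B^N_R}\Delta^N\psi\,d\sigma\;\geq\;-\Vol(B^N_R),
\end{equation*}
so $\Vol(B^N_R)\geq -\psi'(R)\Vol(\partial B^N_R)=q_w(R)\Vol(\partial B^N_R)$, and it is this ``$\geq$'', not the ``$\leq$'' you wrote, that rearranges to $\Vol(\partial B^N_R)/\Vol(B^N_R)\leq 1/q_w(R)=\Vol(\partial B^w_R)/\Vol(B^w_R)$. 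As written, your inequality implies the reverse isoperimetric estimate and contradicts your stated conclusion.

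Second, the claim that $\psi''-\psi'\eta_w$ ``vanishes identically'' is not correct: with $\psi'=-q_w$ and $q_w'=1-(n-1)q_w\eta_w$ one computes $\psi''-\psi'\eta_w=n\,q_w\eta_w-1$, which is generally nonzero. The true reason no balance condition is needed here is that $\Vert\nabla^N r\Vert^2\equiv 1$, so in inequality (\ref{eqLap1}) the term $(\psi''-\psi'\eta_w)\Vert\nabla^P r\Vert^2$ is carried with coefficient exactly $1$, rather than being estimated between $g^2$ and $1$; thus no sign information on $\psi''-\psi'\eta_w$ is required, which is precisely the remark made in the proof of Theorem \ref{thm2.1intrinsic}.
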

\begin{theorem} \label{thm2.1intrinsic}
Let $B^N_R$ be a geodesic ball of a complete Riemannian manifold
$N^n$ with a pole $p$ and suppose that the $p$-radial
sectional curvatures of $N^n$ are bounded from below by the
$p_w$-radial sectional curvatures of a $w$-model space $M^n_w$. Assume that $M^n_w$ is
balanced from above. Then
\begin{equation} \label{eqMain4}
\mathcal{A}_1(B^N_R) \geq \mathcal{A}_1(B^{w}_{T(R)}) \quad ,
\end{equation}
where $B^{w}_{T(R)}$ is the Schwarz symmetrization of $B^N_R$ in
the $w$-space $M^n_w$, i.e., it is the geodesic ball in $M^n_w$
such that $\Vol(B^N_R)= \Vol(B^{w}_{T(R)})$.

Equality in (\ref{eqMain4}) for some fixed radius $R$ implies that $T(R)=R$ and that $B^N_R$ and $B^w_R$ are
isometric.
\end{theorem}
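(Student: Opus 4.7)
My plan is to derive (\ref{eqMain4}) as a specialization of Theorem~\ref{thm2.1} to the case $P^m=N^n$, and then to extract the rigidity statement by tracing the equality cases backwards through that argument. In the intrinsic setting $\nabla^N r=\nabla^P r$ and $H_P\equiv 0$, so one may take $g(r)\equiv 1$ and $h(r)\equiv 0$; the stretching function $s$ is then the identity, $W=w$, and the isoperimetric comparison space $C^n_{w,1,0}$ collapses to $M^n_w$. The operator $\LL$ of (\ref{eq3.7}) becomes the radial Laplacian on $M^n_w$, and since $\|\nabla^P r\|^2\equiv 1$ there is no tangency defect left to absorb in the Laplace comparison step of the proof of Theorem~\ref{thm2.1}; consequently inequality (\ref{eqYellow}) plays no role here, and the $w$-balance-from-below hypothesis of that theorem can be dropped. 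Only the $W$-balance from above---precisely the hypothesis that $M^n_w$ is balanced from above---survives, and it is used exactly where Proposition~\ref{lemineqphiE} invokes $q_W'\ge 0$ to derive $\psi^{*\prime}\le E^{w\prime}_{T(R)}$. The chain of inequalities in the proof of Theorem~\ref{thm2.1} then yields (\ref{eqMain4}).

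For the equality statement, suppose $\mathcal{A}_1(B^N_R)=\mathcal{A}_1(B^w_{T(R)})$ and run this chain backwards. Equality of the two integrals, combined with the pointwise inequality $E_R\ge\psi$ coming from the maximum principle (both functions vanish on $\partial B^N_R$), forces $E_R(x)=\psi(r(x))=E^w_R(r(x))$ throughout $B^N_R$. Applying $\Delta^N$ and using $\Delta^N E_R=-1=\Delta^{M^n_w}E^w_R$ puts us in the equality case of the Laplacian comparison of Theorem~\ref{corLapComp}(i); since $(E^w_R)'(r)=-q_w(r)<0$ for $r\in(0,R]$, this pins down $\Delta^N r(x)=(n-1)\eta_w(r(x))$ throughout $B^N_R\setminus\{p\}$.

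From here I would conclude in two steps. First, by the divergence theorem and the co-area formula, this radial Laplacian identity translates into the ODE $\frac{d}{dr}\log\Vol(S^N_r)=(n-1)\eta_w(r)=\frac{d}{dr}\log\Vol(S^w_r)$; the common Euclidean behavior at $r=0$ (valid because $p$ is a pole, so $\exp_p$ is a diffeomorphism near $0$) then forces $\Vol(S^N_r)=\Vol(S^w_r)$, hence $\Vol(B^N_r)=\Vol(B^w_r)$ for every $r\in[0,R]$, and in particular $T(R)=R$. Second, the pointwise equality $\Delta^N r=(n-1)\eta_w(r)$ together with the sectional curvature lower bound $K_{p,N}\ge -w''/w$ is the classical rigidity case of the Hessian/Jacobi comparison: every Jacobi field along a radial geodesic issuing from $p$ realises the model length profile $w(r)$, so $\exp_p$ pulls the metric of $N$ back to $dr^2+w(r)^2 g_{S^{n-1}_1}$ on the $R$-ball of $T_pN$, and $B^N_R$ is isometric to $B^w_R$.

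The principal difficulty will be this last rigidity step---upgrading the pointwise equality of mean curvatures of geodesic spheres to a global isometry of balls---but it is standard Hessian-comparison rigidity and can be invoked from \cite{GreW} rather than rederived here.
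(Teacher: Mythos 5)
Your proposal is correct and follows essentially the same route as the paper: specialize $g\equiv 1$, $h\equiv 0$ so that $C^n_{w,1,0}=M^n_w$ and $\LL$ is the radial Laplacian, observe that $\|\nabla^P r\|\equiv 1$ makes the $w$-balance-from-below hypothesis unnecessary, apply the Laplacian comparison and maximum principle to get $E_R\ge\psi$, and then run the symmetrization argument (using $q_w'\ge 0$ from balance from above) exactly as in Theorem~\ref{thm2.1}. The only difference is cosmetic: where the paper disposes of the rigidity statement by a citation to \cite{MP4}, you spell out the steps (pointwise equality $E_R=\psi$, hence $\Delta^N r=(n-1)\eta_w(r)$, hence $\Vol(S^N_r)=\Vol(S^w_r)$ giving $T(R)=R$, hence Hessian-comparison rigidity giving the isometry), all of which are sound.
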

\begin{proof}
The proof follows the ideas of Theorem \ref{thm2.1}. In this case,
since, $g(r)=1$ and $h(r)=0$, the second order differential
operator $L$ agrees with the Laplacian on functions of one
variable defined on the model spaces $M^n_w$,
\begin{displaymath}
L f(r)=f''(r)+(n-1)\eta_w(r)f'(r).
\end{displaymath}
Solving the corresponding problem (\ref{eqPoisson1}) on $[0,R]$
under this conditions, transplanting the solution $\psi(r)$ to the
geodesic ball $B^N_R$, and applying Laplacian comparison analysis, (namely, using inequality (\ref{eqLap1}) in
Theorem \ref{corLapComp} when $\psi' \leq 0$),
we obtain the inequality
\begin{equation}
\Delta^N \psi(r(x))\geq -1=\Delta^N E_R(x).
\end{equation}
Since $\|\nabla^P r\|=1$, the sign of $\psi''(r)-\psi'(r)
\eta_w(r)$ is obsolete in this setting and we do not need to
assume that $M^n_w$ is $w$-balanced from below.

Therefore, since $\psi(R)=E_R(R)=0$, the Maximum Principle, gives
\begin{equation}\label{deslaplaintri}
E_R(x)\geq \psi(r(x)) \quad \textrm{for all}\quad x\in B^N_R,
\end{equation}
and we have
\begin{displaymath}
\mathcal{A}_1(B^N_R)=\int_{B^N_R} E_R d\sigma \geq \int_{B^N_R}
\psi d\sigma=\int_{B^w_{T(R)}} \psi^* d\tilde\sigma,
\end{displaymath}
where $B^w_{T(R)}$ is the Schwarz symmetrization of the geodesic
ball $B^N_R$ in the $w$-model space $M^n_w$, that is, the geodesic
ball satisfying that $\Vol(B^N_R)=\Vol(B^w_{T(R)})$. From
(\ref{compvolintri}), we know that $T(R)\leq R$.

Now, we consider the radial solution $E^w_{T(R)}(r)$ of the
problem

\begin{eqnarray*}
\Delta^{M^n_w} E&=&-1\quad \textrm{on } B^w_{T(R)},\\
E\vert_{\partial B^w_{T(R)}}&=&0.
\end{eqnarray*}

With an argument analogous to that of Theorem \ref{thm2.1}, we
conclude that
\begin{displaymath}
\psi^*(t)\geq E^w_{T(R)}(t) \quad \textrm{for all } t \in
[0,T(R)],
\end{displaymath}
and then
\begin{displaymath}
\mathcal{A}_1(B^N_R) \geq \int_{B^w_{T(R)}} \psi^* d\tilde\sigma
\geq \int_{B^w_{T(R)}} E^w_{T(R)}
d\tilde\sigma=\mathcal{A}_1(B^w_{T(R)}).
\end{displaymath}

To prove the equality assertion, we must take into account that equality in (\ref{eqMain4}) for some fixed radius
$R>0$ implies equality in (\ref{deslaplaintri}) for all $x \in B^N_R$. Then, the exponential map from the pole
$p$ generates an isometry from $B^N_R$ onto $B^w_R$ in the way described in \cite{MP4}.

\end{proof}

The following intrinsic version of Theorem \ref{thm2.2} was stated and proved in
\cite{MP4}.

\begin{theorem} \label{thm2.2intrinsic}
Let $B^N_R$ be a geodesic ball of a complete Riemannian manifold
$N^n$ with a pole $p$ and suppose that the $p$-radial
sectional curvatures of $N^n$ are bounded from above by the
$p_w$-radial sectional curvatures of a $w$-model space $M^n_w$. Assume that $M^n_w$
is totally balanced. Then
\begin{equation} \label{eqMain5}
\mathcal{A}_1(B^N_R) \leq \mathcal{A}_1(B^{w}_{T(R)}) \quad ,
\end{equation}
where $B^{w}_{T(R)}$ is the Schwarz symmetrization of $B^N_R$ in
the $w$-space $M^n_w$, i.e., it is the geodesic ball in $M^n_w$
such that $\Vol(B^N_R)= \Vol(B^{w}_{T(R)})$.

Equality in (\ref{eqMain5}) for some fixed radius $R$ implies that $T(R)=R$ and that $B^N_R$ and
$B^w_R$ are isometric.
\end{theorem}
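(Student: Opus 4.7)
The plan is to mimic the argument of Theorem \ref{thm2.2} in the intrinsic specialization $P^m = N^n$, where, as noted at the outset of this section, the data collapse to $g \equiv 1$, $h \equiv 0$, $s(r) = r$, and $M^n_W = C^n_{w,1,0} = M^n_w$. Under this identification Proposition \ref{relphiE} tells me that the radial solution of the Poisson ODE (\ref{eqPoisson1}) on $[0,R]$ is simply the model mean exit time $\psi(r) = E^w_R(r)$, with derivative $\psi'(r) = -q_w(r) \leq 0$.

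First I would transplant $\psi$ to $B^N_R$ via $x \mapsto \psi(r(x))$. Since $\psi' \leq 0$, since the $p$-radial sectional curvatures of $N$ are bounded above by $-w''/w$, and since $\Vert \nabla^N r \Vert \equiv 1$ and $H_P \equiv 0$, the Laplacian comparison inequality of Theorem \ref{corLapComp}(ii) reduces to
\begin{equation*}
\Delta^N (\psi \circ r)(x) \leq \psi''(r(x)) + (n-1)\eta_w(r(x))\psi'(r(x)) = -1 = \Delta^N E_R(x),
\end{equation*}
where the middle equality is just the radial Poisson equation satisfied by $E^w_R$ on $M^n_w$. Hence $E_R - \psi$ is subharmonic on $B^N_R$ with vanishing boundary values, and the maximum principle gives $E_R \leq \psi$. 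Integrating and invoking the Schwarz symmetrization identity of Theorem \ref{eqschwarz},
\begin{equation*}
\mathcal{A}_1(B^N_R) = \int_{B^N_R} E_R \, d\sigma \leq \int_{B^N_R} \psi \, d\sigma = \int_{B^w_{T(R)}} \psi^* \, d\tilde\sigma .
\end{equation*}

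The principal remaining task, and the main technical obstacle, is to show $\psi^* \leq E^w_{T(R)}$ on $B^w_{T(R)}$, the direct analog of Proposition \ref{lemineq}. Following the same blueprint I would write $\psi^{*'}(\tilde r(t)) = 1/\tilde r'(t)$, express $\tilde r'(t)$ via the co-area formula applied to $\psi$ on the level sets $\partial B^N_{a(t)}$, and then invoke the dual of the machinery of Section \ref{secIsopRes} to close the inequality. The hypothesis that $M^n_w$ is \emph{totally balanced} supplies two ingredients: balance from above ($q_w' \geq 0$), combined with the intrinsic volume comparison $\Vol(B^N_r) \geq \Vol(B^w_r)$ (so that $\tilde r(t) \geq a(t)$), lets me replace $q_w(a(t))$ by $q_w(\tilde r(t))$; balance from below (equivalent to (\ref{diffcondition2})), together with the intrinsic dual isoperimetric inequality
\begin{equation*}
\frac{\Vol(\partial B^N_r)}{\Vol(B^N_r)} \geq \frac{\Vol(\partial B^w_r)}{\Vol(B^w_r)},
\end{equation*}
itself obtained by applying Theorem \ref{corLapComp}(ii) and the divergence theorem to the transplant of $\psi$, supplies the remaining inequality. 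The conclusion is $\psi^{*'}(\tilde r) \geq -q_w(\tilde r) = E^{w'}_{T(R)}(\tilde r)$ on $[0,T(R)]$; integrating backwards from $T(R)$, where both functions vanish, gives $\psi^* \leq E^w_{T(R)}$, and combining with the previous display proves (\ref{eqMain5}).

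For the rigidity assertion, equality in (\ref{eqMain5}) propagates backward through the chain above: it forces $E_R \equiv \psi \circ r$ on $B^N_R$, and hence equality throughout the Laplacian comparison (\ref{eqLap2}) at every point. The standard rigidity argument invoked in \cite{MP4} then yields that $\exp_p$ is an isometry of $B^w_R$ onto $B^N_R$; in particular $T(R) = R$.
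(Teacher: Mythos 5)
Your proof follows the paper's own route almost exactly: solve the Poisson ODE for $\psi$ on $[0,R]$, transplant to $B^N_R$, apply the Laplacian comparison of Theorem \ref{corLapComp}(ii), conclude $E_R \leq \psi$ by the maximum principle, invoke the symmetrization identity (Theorem \ref{eqschwarz}), and then establish a pointwise comparison $\psi^* \leq E^w_{T(R)}$ by controlling $\psi^{*'}$ via the co-area formula, the intrinsic isoperimetric inequality, the volume comparison $\Vol(B^N_r) \geq \Vol(B^w_r)$, and monotonicity of $q_w$. The rigidity argument via equality in the Laplacian comparison is also the paper's.

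The one genuine misstep is your account of why $M^n_w$ must be \emph{totally} balanced. You say that balance from below (\ref{diffcondition2}) combines with the dual isoperimetric inequality to close the pointwise estimate. In fact, balance from below plays no role whatsoever in the analytic chain: in the intrinsic case $\|\nabla^P r\| \equiv 1$, so the Laplacian comparison (\ref{eqLap2}) delivers $\Delta^N \psi \leq -1$ without any sign hypothesis on $\psi'' - \psi'\eta_w$, and the dual isoperimetric inequality $\Vol(\partial B^N_r)/\Vol(B^N_r) \geq \Vol(\partial B^w_r)/\Vol(B^w_r)$ is then a pure consequence of the curvature upper bound via the divergence theorem. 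Only balance from above ($q_w' \geq 0$) is used in the $\psi^{*'}$ estimate. The actual purpose of balance from below here is entirely different and your argument tacitly needs it: since $\Vol(B^N_R) \geq \Vol(B^w_R)$, the radius $T(R)$ with $\Vol(B^w_{T(R)}) = \Vol(B^N_R)$ may simply fail to exist if $M^n_w$ has finite total volume. Balance from below forces $w' > 0$, hence $\Vol(B^w_r) \to \infty$, guaranteeing the symmetrized ball $B^w_{T(R)}$ is well defined for every $R$. This is exactly the point made in the remark following the theorem, and your proposal should address it explicitly rather than attribute the hypothesis to a step where it is not used.
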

\begin{proof}

We solve (\ref{eqPoisson1}) under the same conditions as in the proof of Theorem \ref{thm2.1intrinsic}, and
transplant the solution to the geodesic ball $B^N_R$. In this case, the $p$-radial sectional curvatures of $N$
are bounded from above by the $p_w$-radial sectional curvatures in $M^n_w$, and $\psi'(r) \leq 0$ so we
have the inequality
\begin{equation}\label{intrinsicLapIneq2}
\Delta^N \psi(r(x))\leq -1=\Delta^N E_R(x).
\end{equation}
Hence, $E_R \leq \psi$ on $B^N_R$ and we have inequality (\ref{eqMain}) using the same arguments as in the proof of
Theorem \ref{thm2.1intrinsic}.

The equality assertion follows from same considerations than in Theorem \ref{thm2.1intrinsic}.

\end{proof}

\begin{remark} Although we do not need the condition that the $w$-model
space be balanced from below to conclude that $E_R \leq \psi$ on
$B^N_R$, we need to guarantee that there is enough room for the
symmetrization construction. In this setting, $\Vol(B^N_R)\geq
\Vol(B^w_R)$ for each $R$, and the volume of the $w$-model may be
finite. However, if the $w$-model space is $w$-balanced from
below, $w'(r)>0$ and the volume $\Vol(B^w_r)$ increases to
$\infty$. For this reason, we assume that $M^n_w$ is totally
balanced in Theorem \ref{thm2.2intrinsic}.
\end{remark}

\section{Average mean exit time function}

The geometric average mean exit time from the extrinsic balls $D_R$, defined by the quotient
$\mathcal A_1(D_R)/\Vol (D_R)$, was introduced in \cite {MP4}, with the purpose to give some idea about
the volume-relative swiftness of the Brownian motion defined on the submanifold $P$ at infinity, in connection with
the more classical properties like transience and recurrence.\\

As alluded to in the Introduction, we have been inspired partially by the works \cite{BBC} and \cite{BG}, where
the authors find upper bounds for the torsional rigidity of domains in  Euclidean spaces which satisfy Hardy
inequalities. These inequalities guarantee that the boundaries of the domains are not too thin so that the
Brownian diffusion is guaranteed sufficient room for escape.\\

In our present setting, the thickness of the boundary is replaced by the isoperimetric inequalities
(\ref{eqIsopGeneralA}) and (\ref{eqIsopGeneralB}), satisfied by our extrinsic domains in different curvature contexts,
which controls whether the Brownian diffusion process is slow or  fast at infinity.\\

For example, although Brownian diffusion is known to be transient in Euclidean spaces of dimensions larger than $2$,
it is not sufficiently swift, however, to give even a finite average of the mean exit time at infinity for geodesic
balls, (see \cite[Corollary 5.2]{MP4}). Concerning this observation, we gave in \cite{MP4} a set of curvature
restrictions which give finiteness of the average mean exit time at infinity for minimal submanifolds. We shall
present in Corollary \ref{cor2}, a generalization of this result for submanifolds with controlled radial mean
curvature. On the other hand, in Corollary \ref{cor1} a {\em dual} version of this result is presented in the sense
that we find a set of curvature bounds which guarantee that the average of the mean exit time at infinity is
infinite, (thus obtaining a set of curvature restrictions under which the Brownian diffusion process defined
on the submanifold is {\em slow}).\\

In the following results, we shall denote as $\mathbb{\bar R}$ the {\em extended} real line so
that $\mathbb{\bar R}_{+}=\mathbb{R}_{+}\cup\{\infty\}$.

\begin{proposition}\label{average}
Let $M^m_w$ be a $w$-model space with infinite volume. Let us suppose that the following limit exists:
$$\lim_{R \to \infty} q_W(R)=q_W(\infty) \in \mathbb{\bar R}_{+}\quad .$$

Then the average mean exit time from the $R$-balls
in these model spaces satisfies:

\begin{equation}
\lim_{R\rightarrow \infty}
\frac{\mathcal{A}_1(B^w_R)}{\Vol(B^w_R)}=q_W^{2}(\infty) \in
\mathbb{\bar R}_{+}\quad .
\end{equation}
\end{proposition}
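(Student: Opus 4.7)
The plan is to invoke Proposition \ref{propW1} to express the quotient in a form amenable to L'H\^opital's rule, using the cleanly packaged differential formula $\frac{d}{dR}\mathcal{A}_1(B^w_R) = q_w^2(R)\Vol(S^w_R)$ already established there. Since $\Vol(B^w_R) = V_0 \int_0^R w^{m-1}(r)\, dr$, we also have $\frac{d}{dR}\Vol(B^w_R) = V_0 w^{m-1}(R) = \Vol(S^w_R)$. These two derivative formulas are the key to bypassing an integral estimate.

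Next, I would verify that the hypothesis on $M^m_w$ forces the denominator to diverge: since the model has infinite volume, $\Vol(B^w_R) \to \infty$ as $R \to \infty$. Thus the quotient is of the form $\ast/\infty$, and the standard L'H\^opital theorem (in the $\ast/\infty$ form, which does not require the numerator to diverge) gives
\begin{equation*}
\lim_{R\to\infty}\frac{\mathcal{A}_1(B^w_R)}{\Vol(B^w_R)} \;=\; \lim_{R\to\infty}\frac{q_w^2(R)\,\Vol(S^w_R)}{\Vol(S^w_R)} \;=\; \lim_{R\to\infty} q_w^2(R) \;=\; q_w^2(\infty),
\end{equation*}
provided the right-hand limit exists in $\bar{\mathbb{R}}_+$, which is precisely the assumption.

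The only delicate point is the case $q_w(\infty) = +\infty$: here the numerator $\mathcal{A}_1(B^w_R) = \int_{B^w_R} q_w^2\, d\tilde\sigma$ also diverges (from the alternate expression in Proposition \ref{propW1}), so we are in the $\infty/\infty$ case and L'H\^opital again applies without modification. Alternatively, one can avoid L'H\^opital altogether by a direct Ces\`aro-type argument on the integrals $\int_0^R q_w^2(r) w^{m-1}(r)\, dr$ versus $\int_0^R w^{m-1}(r)\, dr$, splitting $[0,R]$ into $[0, R_0] \cup [R_0, R]$ and using that $q_w^2(r)$ is eventually close to $q_w^2(\infty)$; the tail integral dominates because $\int_0^R w^{m-1}(r)\, dr \to \infty$. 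This is the step I expect to require the most care in a fully rigorous write-up, but it is routine.
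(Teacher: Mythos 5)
Your proposal is correct and follows essentially the same route as the paper: L'H\^opital's rule applied to $\mathcal{A}_1(B^w_R)/\Vol(B^w_R)$, using the derivative formula (\ref{derivative2}) and the fact that $\frac{d}{dR}\Vol(B^w_R)=\Vol(S^w_R)$. You are in fact more careful than the paper's own terse proof in explicitly verifying that the infinite-volume hypothesis gives $\Vol(B^w_R)\to\infty$, invoking the $\ast/\infty$ form of L'H\^opital, and separately discussing the $q_w(\infty)=+\infty$ case.
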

\begin{proof}

We apply L'Hospital's Rule to the differentiable functions in
$]0,\infty[$, $f(R)=\mathcal{A}_1(B^w_R)$ and $g(R)=\Vol(B^w_R)$.
Using the fact that, in the model spaces,  the derivative of the
volume of the geodesic balls is equal to the volume of the
geodesic spheres, see (\ref{eqDefq}), and equation
(\ref{derivative2}) we have

\begin{equation}\label{averagepositive}
\lim_{R\to \infty} \frac{\mathcal{A}_1(B^w_R)}{\Vol(B^w_R)} =
\lim_{R\to \infty} q^2_W(R)= \left(\lim_{R\to \infty}
q_W(R)\right)^2 \quad .
\end{equation}
\end{proof}
\begin{remark}
If $q^2_W(\infty) >0$, then, since the volume of the space is infinite,
$\lim_{R\to \infty} \Vol(B^w_R)=\infty$ and from
inequality (\ref{averagepositive}),
 $\lim_{R\rightarrow \infty}
\mathcal{A}_1(B^w_R)=\infty$.
\end{remark}

As corollaries of Proposition \ref{average} and Theorems \ref{thm2.1} and \ref{thm2.2}, we have the following
results:

\begin{corollary}\label{cor1}
Let $\{ N^{n}, P^{m}, C_{w,g,h}^{m} \}$ denote a comparison
constellation  boun\-ded from below. Assume that
$M^m_W=C_{w,g,h}^{m}$ is $w$-balanced from below, that it is
$W-$balanced from above, and that it has infinite volume. Let
$D_R$ be an extrinsic $R$-ball in $P^m$, with center at a point $p
\in P$ which also serves as a pole in $N$. If the volume of the
submanifold $P$ is infinite, and $\lim_{R \to \infty}
q_W(R)=q_W(\infty) =\infty$ then
\begin{equation}
\lim_{R\rightarrow \infty}
\frac{\mathcal{A}_1(D_R)}{\Vol(D_R)}\geq q^2_W(\infty)
=\infty\quad .
\end{equation}
\end{corollary}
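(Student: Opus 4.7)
The plan is to combine the lower bound for torsional rigidity in Theorem \ref{thm2.1} with the asymptotic formula in Proposition \ref{average}, once we have verified that the Schwarz-symmetrization radius $T(R)$ tends to infinity along with $R$.

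First, I would invoke Theorem \ref{thm2.1}. Since $\{N^n,P^m,C^m_{w,g,h}\}$ is a comparison constellation bounded from below, and the isoperimetric comparison space $M^m_W=C^m_{w,g,h}$ is assumed $w$-balanced from below and $W$-balanced from above, all the hypotheses of that theorem hold for every precompact extrinsic ball $D_R$. Letting $B^W_{T(R)}\subset M^m_W$ denote the Schwarz-symmetrization of $D_R$, so that $\Vol(D_R)=\Vol(B^W_{T(R)})$, we obtain
\begin{equation*}
\frac{\mathcal{A}_1(D_R)}{\Vol(D_R)}\;\geq\;\frac{\mathcal{A}_1(B^W_{T(R)})}{\Vol(B^W_{T(R)})}.
\end{equation*}

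Next, I would show that $T(R)\to\infty$ as $R\to\infty$. Since $P^m$ is assumed to have infinite volume and the domains $D_R$ exhaust $P$ as $R\to\infty$, $\Vol(D_R)\to\infty$. By construction $\Vol(B^W_{T(R)})=\Vol(D_R)$, and because $M^m_W$ has infinite volume, the function $t\mapsto \Vol(B^W_t)$ is continuous, strictly increasing and unbounded; hence $T(R)\to\infty$.

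Finally, I would apply Proposition \ref{average} to the model space $M^m_W$, whose isoperimetric quotient $q_W$ satisfies $\lim_{s\to\infty}q_W(s)=\infty$ by hypothesis. This yields
\begin{equation*}
\lim_{R\to\infty}\frac{\mathcal{A}_1(B^W_{T(R)})}{\Vol(B^W_{T(R)})}\;=\;\lim_{s\to\infty}\frac{\mathcal{A}_1(B^W_s)}{\Vol(B^W_s)}\;=\;q_W^2(\infty)\;=\;\infty,
\end{equation*}
and combining this with the inequality displayed above gives the claim.

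The argument is essentially a concatenation of Theorem \ref{thm2.1}, the volume monotonicity built into the symmetrization, and Proposition \ref{average}; there is no real analytical obstacle. The only point requiring a brief justification is that $T(R)\to\infty$, which is where the two hypotheses on infinite volume (of $P$ and of the model space) are used.
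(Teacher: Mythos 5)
Your proposal is correct and follows essentially the same route as the paper: invoke Theorem \ref{thm2.1} to reduce to the model-space quotient, argue that $T(R)\to\infty$ from the two infinite-volume hypotheses (the paper does this by contradiction, you do it directly via monotonicity of $t\mapsto\Vol(B^W_t)$, a cosmetic difference), and then conclude with Proposition \ref{average}.
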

\begin{proof}
Applying Theorem \ref{thm2.1},
\begin{equation}
\lim_{R\rightarrow
\infty}\frac{\mathcal{A}_1(D_R)}{\Vol(D_R)}\geq
\lim_{R\rightarrow
\infty}\frac{\mathcal{A}_1(B^W_{T(R)})}{\Vol(B^W_{T(R)})},
\end{equation}
where $B^W_{T(R)}$ is the Schwarz symmetrization of $D_R$ in the
model space $M^m_W$.

Now, suppose that $\lim_{R\rightarrow \infty}
T(R)=T_\infty<\infty$. Then,
\begin{equation}\label{infinitevolume}
\Vol(P)=\lim_{R\rightarrow \infty} \Vol(D_R)=\lim_{R\rightarrow
\infty}\Vol (B^W_{T(R)})=\Vol(B^W_{T_\infty}) <\infty,
\end{equation}
which leads to a contradiction. As a consequence, $T(R)$ goes to
$\infty$ and we can replace $T(R)$ by $R$ in the limit
construction in the model space, that is
\begin{equation}
\lim_{R\rightarrow
\infty}\frac{\mathcal{A}_1(D_R)}{\Vol(D_R)}\geq
\lim_{R\rightarrow
\infty}\frac{\mathcal{A}_1(B^W_{R})}{\Vol(B^W_{R})}\,\, .
\end{equation}

The result follows now applying Proposition \ref{average}.
To do that, we must check that
\begin{equation}
\lim_{R \to \infty}\Vol(B^W_R)=\infty ,
\end{equation}
but this follows from $\Vol(M^m_W)=\infty$.
On the other hand, we assume that $\lim_{R\to\infty} q_W(R)=\infty \in \mathbb{\bar R}$.
\end{proof}

\begin{corollary}(see \cite[Corollary 2.3]{MP4})\label{cor2}
Let $\{ N^{n}, P^{m}, C_{w,1,h}^{m} \}$ denote a comparison
constellation  bounded from above. Assume that $M^m_W=C_{w,1,h}^{m}$ is
$w$-balanced from below, is $W-$balanced from above, and has infinite volume. Let $D_R$ be an extrinsic
$R$-ball in $P^m$,
 with center at a point $p \in P$ which also serves as a pole in $N$.
 Suppose that the model space geodesic spheres do not have $0$ as a limit for their mean
 curvatures $\eta_W(R)$ as $R \to \infty$ and that these mean curvatures satisfy
 $\eta_W(r)>0 \,\,\,{\textrm{for all \,}} r>0$.

 Then $\lim_{R \to \infty} q_W(R)=q_W(\infty) <\infty$
and
\begin{equation}
\lim_{R\rightarrow \infty}
\frac{\mathcal{A}_1(D_R)}{\Vol(D_R)}\leq q^2_W(\infty)
<\infty\quad .
\end{equation}
\end{corollary}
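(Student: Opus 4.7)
The strategy mirrors that of Corollary~\ref{cor1}, using Theorem~\ref{thm2.2} in place of Theorem~\ref{thm2.1} and with the inequalities reversed. The proof proceeds in three stages: verify that $q_W(\infty) < \infty$, reduce to a computation in the model space via Schwarz--symmetrization, and conclude via Proposition~\ref{average}.

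For the finiteness of $q_W(\infty)$, note first that since $M^m_W$ is $W$-balanced from above, the equivalent condition (\ref{diffcondition3}) asserts $q_W'(s) \geq 0$, so $q_W$ is non-decreasing on $[0,\infty)$ and $q_W(\infty) = \lim_{s \to \infty} q_W(s)$ exists in $\mathbb{\bar R}_+$. The same balance condition in its original form yields
$$q_W(s)\,\eta_W(s) \leq \frac{1}{m-1}, \qquad \text{so} \qquad q_W(s) \leq \frac{1}{(m-1)\,\eta_W(s)}.$$
Under the hypothesis that $\eta_W(R)$ does not tend to $0$ as $R \to \infty$, together with $\eta_W(r) > 0$ for all $r > 0$, there exists a sequence $s_n \to \infty$ and a constant $c > 0$ with $\eta_W(s_n) \geq c$; hence $q_W(s_n) \leq 1/((m-1)c)$, and since $q_W$ is monotone the limit $q_W(\infty)$ is also bounded by this constant, in particular finite.

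For the symmetrization reduction, Theorem~\ref{thm2.2} applies and gives $\mathcal{A}_1(D_R) \leq \mathcal{A}_1(B^W_{T(R)})$, while by definition of $T(R)$ we have $\Vol(D_R) = \Vol(B^W_{T(R)})$; dividing,
$$\frac{\mathcal{A}_1(D_R)}{\Vol(D_R)} \leq \frac{\mathcal{A}_1(B^W_{T(R)})}{\Vol(B^W_{T(R)})}.$$
To pass to the limit on the right, I must verify $T(R) \to \infty$. By Corollary~\ref{corVolBound}(ii), $\Vol(D_R) \geq \Vol(B^W_R)$, and since $\Vol(M^m_W) = \infty$ we have $\Vol(B^W_R) \to \infty$; therefore $\Vol(B^W_{T(R)}) = \Vol(D_R) \to \infty$, which, given the finiteness and monotonicity of $s \mapsto \Vol(B^W_s)$ on bounded intervals, forces $T(R) \to \infty$. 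Applying Proposition~\ref{average} to the infinite-volume model $M^m_W$ then yields
$$\lim_{R \to \infty} \frac{\mathcal{A}_1(B^W_{T(R)})}{\Vol(B^W_{T(R)})} \, = \, q_W^2(\infty),$$
and combining with the displayed inequality above closes the argument.

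The main obstacle is the first stage: the conclusion $q_W(\infty) < \infty$ is precisely the place where the curvature-type hypotheses on $\eta_W$ enter essentially, paired with the upper balance condition supplying the monotonicity of $q_W$. The remaining steps are mechanical consequences of Theorem~\ref{thm2.2}, Corollary~\ref{corVolBound}, and Proposition~\ref{average}, and parallel verbatim (with reversed inequalities) the argument used for Corollary~\ref{cor1}.
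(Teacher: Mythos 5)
Your proof is correct and follows essentially the same route as the paper: apply Theorem~\ref{thm2.2} to pass to the symmetrized ball, verify $T(R)\to\infty$, invoke Proposition~\ref{average}, and extract $q_W(\infty)<\infty$ from the $W$-balance-from-above inequality $q_W\eta_W\leq 1/(m-1)$ combined with the hypothesis that $\eta_W$ does not tend to $0$. One point where your argument is actually a small improvement over the paper's: the paper justifies $T(R)\to\infty$ by the phrase \lq\lq as in the proof of Corollary~\ref{cor1}," but that earlier argument relies on the hypothesis $\Vol(P)=\infty$, which does \emph{not} appear in the statement of Corollary~\ref{cor2}. Your route via Corollary~\ref{corVolBound}(ii) -- $\Vol(D_R)\geq\Vol(B^W_R)\to\infty$ since the $W$-model has infinite volume, hence $\Vol(B^W_{T(R)})\to\infty$ and thus $T(R)\to\infty$ -- closes this small gap without any extra assumption on $\Vol(P)$, and your subsequence argument for the finiteness of $q_W(\infty)$ is also a touch more careful than the paper's terse statement.
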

\begin{proof}
Proceeding as in Corollary \ref{cor1}, and applying Theorem
\ref{thm2.2}, we have firstly
\begin{equation}
\lim_{R\rightarrow
\infty}\frac{\mathcal{A}_1(D_R)}{\Vol(D_R)}\leq
\lim_{R\rightarrow
\infty}\frac{\mathcal{A}_1(B^W_{T(R)})}{\Vol(B^W_{T(R)})},
\end{equation}
where $B^W_{T(R)}$ is the Schwarz--symmetrization of $D_R$ in the
model space $M^m_W$.

As in the proof of Corollary \ref{cor1}, we can replace $T(R)$ by $R$ in the limit
construction in the model space, that is
\begin{equation}
\lim_{R\rightarrow
\infty}\frac{\mathcal{A}_1(D_R)}{\Vol(D_R)}\leq
\lim_{R\rightarrow
\infty}\frac{\mathcal{A}_1(B^W_{R})}{\Vol(B^W_{R})}=\lim_{R \to \infty} q_W(R) ,
\end{equation}
and we apply Proposition \ref{average}, because by hypothesis, $\Vol(M^m_W)=\infty$, and, on the other hand, the
limit
\begin{equation}\label{finitelimit}
\lim_{R\to\infty}q_W(R) <\infty .
\end{equation}

To see inequality (\ref{finitelimit}) we use the fact that $\lim_{R\to\infty}\eta_W(R)\neq 0$.
Then, as $q_W'(R) \geq 0$, we have that $q_W(R)\eta_W(R) \leq \frac{1}{m-1}$, see \cite[Observation 3.8]{MP4}, so,
as $\eta_W(R) \geq 0$ for all $R$, we get
\begin{equation}
q_W(R) \leq \frac{1}{(m-1)\eta_W(R)} .
\end{equation}
\end{proof}
\begin{remark}
When $P$ is minimal, we may use $ h = 0$ as a bound for the $p$-radial mean curvature, and hence, since by
hypothesis $g=1$, we have: $W=w$. In this case and by virtue of the balance conditions, the model space $M^m_w$ is
totally balanced and then we have  $\eta_w(R) >0\,\,\,{\textrm{for all \,}} R>0$. Therefore, Corollary \ref{cor2}
clearly generalizes \cite[Corollary 2.3 ]{MP4}.
\end{remark}

\end{document}